\documentclass[a4paper, reqno, 11pt]{amsart}


\usepackage[usenames,dvipsnames]{color}

\usepackage{amsthm,amsfonts,amssymb,amsmath,amsxtra,amsrefs}
\usepackage[all]{xy}
\SelectTips{cm}{}
\linespread{1.2}
\usepackage{xr-hyper}
\usepackage[colorlinks=
   citecolor=Black,
   linkcolor=Red,
   urlcolor=Blue]{hyperref}
\usepackage{verbatim}
\makeatletter
\newtheorem*{rep@theorem}{\rep@title}
\newcommand{\newreptheorem}[2]{%
\newenvironment{rep#1}[1]{%
 \def\rep@title{#2 \ref{##1}}%
 \begin{rep@theorem}}%
 {\end{rep@theorem}}}
\makeatother

\usepackage[margin=1in]{geometry}

\usepackage{tikz}

\usepackage{mathrsfs}

\RequirePackage{xspace}
\RequirePackage{etoolbox}
\RequirePackage{varwidth}
\RequirePackage{enumitem}
\RequirePackage{tensor}
\RequirePackage{mathtools}
\RequirePackage{longtable}
\RequirePackage{multirow}



%
\newtheorem{theorem}{Theorem}
\newtheorem{prop}[theorem]{Proposition}

\newtheorem{lemma}[theorem]{Lemma}
\newtheorem{conj}[theorem]{Conjecture}
\newtheorem{cor}[theorem]{Corollary}

\theoremstyle{definition}

\newtheorem{question}[theorem]{Question}
\newreptheorem{theorem}{Theorem}

\newcommand{\cupdot}{\mathbin{\mathaccent\cdot\cup}}

\begin{document}

\title{Inverse problems for minimal complements \linebreak and maximal supplements} 
\author{Noga Alon}
\address{Noga Alon: Department of Mathematics, Princeton University, Princeton, NJ 08544, USA and Schools of Mathematics and Computer Science, Tel Aviv University, Tel Aviv 69978, Israel}
\email{nalon@math.princeton.edu}
\author{Noah Kravitz}
\address{Noah Kravitz: Grace Hopper College, Zoom University at Yale, New Haven, CT 06511}
\email{noah.kravitz@yale.edu}
\author{Matt Larson}
\address{Matt Larson: Department of Mathematics, Stanford, 450 Jane Stanford Way, Stanford, CA 94305}
\email{mwlarson@stanford.edu}

\date{\today}

\begin{abstract}
Given a subset $W$ of an abelian group $G$, a subset $C$ is called an additive complement for $W$ if $W+C=G$; if, moreover, no proper subset of $C$ has this property, then we say that $C$ is a minimal complement for $W$.  It is natural to ask which subsets $C$ can arise as minimal complements for some $W$.  We show that in a finite abelian group $G$, every non-empty subset $C$ of size $|C| \leq 2^{2/3}|G|^{1/3}/((3e \log |G|)^{2/3}$ is a minimal complement for some $W$.  As a corollary, we deduce that every finite non-empty subset of an infinite abelian group is a minimal complement.  We also derive several analogous results for ``dual'' problems about maximal supplements.
\end{abstract}

\maketitle

\section{Introduction}

The \emph{Minkowski sum} of two subsets $A,B$ of an (additive) abelian group $G$ is given by
$$A+B=\{a+b: a \in A, b \in B\}.$$
Given a subset $W \subseteq G$, we say that $C \subseteq G$ is an \emph{(additive) complement} for $W$ in $G$ if $W+C=G$ (that is, if every $g \in G$ can be expressed at least once as $g=w+c$, for $w \in W$, $c \in C$).  If, moreover, no proper subset $C' \subset C$ is a complement for $W$, then we say that $C$ is a \emph{minimal (additive) complement} for $W$ in $G$.  Similarly, we say that $C \subseteq G$ is an \emph{(additive) supplement} for $W$ in $G$ if the translations of $W$ by elements of $C$ are pairwise disjoint (that is, if every $g \in G$ can be expressed at most once as $g=w+c$, for $w \in W$, $c \in C$).  If, moreover, no proper superset $C' \supset C$ is a supplement for $W$, then we say that $C$ is a \emph{maximal (additive) supplement} for $W$ in $G$.  The project of this paper is to investigate which sets $C$ arise as minimal complements and maximal supplements not for a specific subset $W$ but rather for some $W$.  We study these questions in both finite and infinite abelian groups.

\subsection{Background}

The study of minimal complements in infinite abelian groups was initiated by Nathanson \cite{Nathanson}, who showed that if $W$ is a finite non-empty subset of $\mathbb{Z}$, then every complement $C$ for $W$ contains a minimal complement $C'$ for $W$.  The question of determining which subsets $W$ of $\mathbb{Z}$ admit minimal complements has received considerable attention. Chen and Yang \cite{Chen} showed that $W \subseteq \mathbb{Z}$ has a minimal complement if it is unbounded both above and below.  It is easy to check that, for instance, $\mathbb{N}$ does not have a minimal complement in $\mathbb{Z}$. Kiss, S\'{a}ndor, and Yang \cite{Kiss} studied the existence of minimal complements for ``eventually periodic'' subsets of $\mathbb{Z}$.  There has been some progress in infinite abelian groups other than $\mathbb{Z}$: Biswas and Saha \cite{Biswas3} generalized Nathanson's result by showing that if $G$ is any abelian group and $W$ is a finite non-empty subset of $G$, then any complement to $W$ contains a minimal complement.  The same authors \cite{Biswas2} later studied minimal complements in $\mathbb{Z}^r$ more closely.

The study of minimal complements and maximal supplements is related to problems in coding theory. Indeed, when $W$ is a ball in a Hamming space, the complements for $W$ correspond to covering codes. The possible sizes of such covering codes have been extensively studied in various settings (see, e.g., \cite{Honkala1991}). Similarly, maximal supplements for $W$ can be interpreted as sphere packings and thus are related to results such as Hamming bounds on block codes.

We remark that a 1995 paper of Habsieger and Ruzsa \cite{habsieger} treated the closely related matter of finding $C \subset \mathbb{Z}$ such that $W+C$ contains a given interval $[m]=\{1, \ldots, m\}$.  Additive bases and asymptotic additive bases, the study of which dates back at least to Lagrange's Four-Square Theorem, are of course also related to additive complements.  Objects similar to maximal supplements have been studied in the context of Sidon sets (see, e.g., \cite{ruzsa} and the references therein).

The natural ``inverse problem'' for minimal complements is determining whether a given set $C \subseteq G$ is a minimal complement for some set $W$ (in which case we say simply that $C$ is a \emph{minimal complement} in $G$).  This study was initiated by Kwon \cite{Kwon}, who showed that every finite non-empty subset of $\mathbb{Z}$ is a minimal complement. Biswas and Saha \cite{Biswas1} generalized this result to $\mathbb{Z}^r$.

In infinite groups, the inverse problem for minimal complements can behave quite differently from the non-inverse problem.  Nathanson \cite{Nathanson} showed that if $W$ is a finite subset of $\mathbb{Z}$, then every complement $C$ for $W$ contains a minimal complement $C' \subseteq C$ for $W$.  The same is not true, however, for the inverse problem: for instance $C=\{0,1,2\}$ is a complement for $W=2\mathbb{Z}$, but there is no $W' \subseteq 2\mathbb{Z}$ for which $C$ is a minimal complement.  (Of course, $C$ is a minimal complement for other sets, such as $3\mathbb{Z}$.)

\subsection{Main results}

In Section~\ref{sec:preliminaries}, we gather several basic facts about minimal complements, including that in a finite group $G$, no subset $C \subset G$ of size $2|G|/3<|C|<|G|$ can be a minimal complement.  We give a  description of the minimal complements in $G$ with size greater than $3|G|/5$ and the minimal complements that are arithmetic progressions.

In Section~\ref{sec:small}, we show with a probabilistic argument that every ``small'' subset of a finite group is a minimal complement.  Given a finite abelian group $G$, let $T(G)$ be the greatest integer $T$ such that every subset $C \subseteq G$ with size at most $T$ is a minimal complement.  Moreover, given a natural number $n$, let $T(n)$ be the minimum value of $T(G)$ as $G$ ranges over all abelian groups of order $n$.  In other words, $T(n)$ is the greatest integer $T$ such that for every group $G$ of order $n$, every subset $C \subseteq G$ with size at most $T$ is a minimal complement.
\begin{theorem}\label{thm:finite}
Let $G$ be a group of order $n$, and let $C \subset G$ be subset of size $|C|=k$. If $$0 < k \leq  
\frac{2^{2/3}n^{1/3}}{(3 e\log n)^{2/3}},$$ then $C$ is a minimal complement in $G$.  In other words, $T(n) \ge \frac{n^{1/3}}{2 (\log n/ \log 2)^{2/3}}$.
\end{theorem}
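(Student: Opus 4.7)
The plan is to use the probabilistic method. Form $W \subseteq G$ by including each element of $G$ independently with probability $p$, where $p$ will be chosen at the end. Note that $C$ is a minimal complement for $W$ if and only if \textup{(a)}~$C + W = G$, and \textup{(b)}~for each $c \in C$ there exists a \emph{witness} $w \in W$ satisfying $w + c - c' \notin W$ for every $c' \in C \setminus \{c\}$.

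For (a), a union bound gives $\Pr[C + W \neq G] \leq n(1-p)^k$, which is less than $1/2$ whenever $pk \geq \log(2n)$.

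For (b), I would count witnesses. For each $c \in C$, let $U_c = \sum_{w \in G} \mathbf{1}[w \text{ is a witness for } c]$; each $w$ is a witness with probability $\mu := p(1-p)^{k-1}$, so $E[U_c] = n\mu$. The key observation is that the indicator ``$w$ is a witness for $c$'' depends only on the $W$-membership of the $k$-element shadow set $w + c - C$, so this indicator is independent of the analogous indicator at $w'$ whenever $w - w' \notin C - C$. Carefully tracking the overlaps of shadow sets (which are controlled by $|C-C| \leq k^2$) gives a variance estimate of the form
\[
\mathrm{Var}(U_c) \leq n\mu + O(n p k^2 \mu^2),
\]
and Chebyshev followed by a union bound over $c$ yields
\[
\Pr[\exists\, c \in C \text{ with } U_c = 0] \;\lesssim\; \frac{k}{n\mu} + \frac{pk^3}{n}.
\]

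Adding the failure probabilities from (a) and (b), the plan is then to pick $p \asymp (\log n)/k$ and verify that the total is strictly less than $1$ under the hypothesis $k \leq 2^{2/3} n^{1/3}/(3e\log n)^{2/3}$; this would produce the desired $W$ with positive probability. In particular, $n(1-p)^k \leq ne^{-pk}$ goes down once $pk$ exceeds $\log n$, while the witness-failure terms force $pk^3 \ll n$ and $n\mu \gg k$, which together yield the $k \lesssim n^{1/3}/\log^{2/3} n$ threshold after optimizing the constants.

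The main obstacle is the variance bound: a crude estimate on $\mathrm{Var}(U_c)$ yields a threshold far below $n^{1/3}$. One must use the precise combinatorial structure of the shadow-set overlaps (bounding $\sum_{w'}\ell(w,w') \leq (k-1)^2$, where $\ell(w,w') = |(w+c-C)\cap(w'+c-C)|$ and summing only over $w' \neq w$) to get the sharp bound, and then carefully balance the constants in $p$ so that (a) and (b) succeed simultaneously and give the explicit constant $2^{2/3}/(3e)^{2/3}$ stated in the theorem.
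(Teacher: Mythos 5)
Your reduction of minimality to ``coverage plus witnesses'' is correct, but the i.i.d.\ model cannot satisfy both requirements simultaneously, and the obstruction is in the first moments, not (as you suggest) in the variance estimate. Your condition for (a) forces $(1-p)^k \le 1/(2n)$, and then the expected number of witnesses for any fixed $c$ is
\[
\mathbb{E}[U_c] \;=\; n\mu \;=\; np(1-p)^{k-1} \;=\; \frac{p}{1-p}\cdot n(1-p)^{k} \;\le\; \frac{p}{2(1-p)} \;<\; 1
\]
for any $p\le 1/2$, in particular for your choice $p \asymp (\log n)/k$. So the requirement $n\mu \gg k$ appearing in your plan is flatly incompatible with the coverage requirement: in the regime where (a) holds, Markov's inequality gives $\Pr[U_c \ge 1] \le n\mu \ll 1$, so each fixed $c \in C$ has \emph{no} witness with probability $1-o(1)$ and minimality fails almost surely. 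Conversely, if you shrink $p$ so that witnesses become plentiful, the expected number of uncovered elements $n(1-p)^k = n\mu\,(1-p)/p$ blows up and coverage fails. This is not an artifact of the union bound: a random set that is dense enough to cover $G$ has essentially no uniquely represented elements. Pushing the i.i.d.\ approach as far as it can go (balancing FKG-type lower bounds on the coverage probability against exponential bounds on witness failure) only works when $p$ is a constant or close to $1$, which yields a threshold of order $k = O(\log n)$ --- exponentially weaker than the $n^{1/3}/(\log n)^{2/3}$ bound in the theorem.

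The missing idea, which is how the paper proceeds, is to let $W$ depend on $C$ rather than being an unconditioned random set. For each $c_i$ the paper plants random seeds $w_i^{(p)}$ (in fact $s \approx \log n$ candidates per index, later pruned to one) and \emph{designates} $g_i = w_i + c_i$ as the element that will witness the essentiality of $c_i$; it then defines $W$ to be the chosen seeds together with \emph{every} $w \in G$ such that $w + C$ contains none of the designated $g_i$'s. In other words, $W$ is the maximal set compatible with the planted unique representations. Minimality then holds by construction (once the designated points avoid one another's translates --- the paper's event $E_1$), and all of the probabilistic work goes into showing that this forced, near-maximal $W$ still satisfies $W + C = G$ (events $E_2$ and $E_3$). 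This inverts the tension that sinks your argument: instead of hoping that a dense random set happens to retain unique representations, one forces the unique representations and proves the resulting set is dense enough. The boosting over $s$ candidate seeds per element is what pushes the threshold up to $n^{1/3}$ (up to logarithms); with a single seed per element the same scheme does not work in finite groups.
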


Since this growth rate for $T(n)$ can likely be improved, we have not optimized the constant in this theorem. We remark that this bound (like many in the paper) uses the estimate $|C-C| \leq k^2$; when we know that $C-C$ is small, we can often obtain slightly better dependence of $n$ on $k$.  Next, we use this theorem to obtain a generalization of the result of Biswas and Saha \cite{Biswas2} that every nonempty finite subset of $\mathbb{Z}^r$ is a minimal complement. 

\begin{theorem}\label{thm:infinite}
Let $G$ be an infinite abelian group, and let $C$ be a finite non-empty subset of $G$.
Then $C$ is a minimal complement in $G$. 
\end{theorem}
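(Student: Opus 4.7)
The plan is to reduce to the finite case (Theorem~\ref{thm:finite}) by locating a sufficiently large finite group $K$ that is a quotient of some finitely generated subgroup $H \le G$ with $C \subseteq H$ embedding into $K$, obtaining a minimal complement for the image of $C$ there, and then lifting and extending back to $G$.

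The preparatory case split depends on the structure of $G$. If $G$ has an element $g_0$ of infinite order, let $H := \langle C, g_0 \rangle$, which is finitely generated and infinite, so $H \cong \mathbb{Z}^r \oplus T$ with $r \ge 1$ and $T$ finite. Take $H' := N\mathbb{Z}^r \oplus \{0\}$ for a large positive integer $N$; then $K := H/H' \cong (\mathbb{Z}/N\mathbb{Z})^r \oplus T$ has order $N^r |T|$, and for $N$ larger than the maximum absolute value of any $\mathbb{Z}^r$-coordinate occurring in $C$, the projection $\pi : H \to K$ is injective on $C$. If instead $G$ is a torsion group, then $\langle C \rangle$ is finite and $G/\langle C \rangle$ is infinite torsion, hence has finite subgroups of arbitrarily large order; pulling one back yields a finite subgroup $H \le G$ containing $\langle C \rangle$ of any desired large order, and we set $H' := \{0\}$ so that $K = H \supseteq C$. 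In either case, choose the free parameter ($N$ or $|H|$) large enough that the bound of Theorem~\ref{thm:finite} guarantees every $|C|$-element subset of $K$ is a minimal complement in $K$.

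Applying Theorem~\ref{thm:finite} to $K$ produces $\overline{W} \subseteq K$ for which $\overline{C} := \pi(C)$ is a minimal complement. Lift to $W_H := \pi^{-1}(\overline{W}) \subseteq H$. I would then verify that $W_H + C = H$: for any $h \in H$, write $\bar{h} = \bar{w} + \bar{c}$ for some $\bar{w} \in \overline{W}$ and a unique $c \in C$ (by injectivity of $\pi|_C$), so $h - c \in \pi^{-1}(\bar{w}) \subseteq W_H$. Similarly, no $C' \subsetneq C$ can satisfy $W_H + C' = H$, since that would project to $\overline{W} + \overline{C'} = K$ with $\overline{C'} \subsetneq \overline{C}$, contradicting minimality in $K$.

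Finally, extend from $H$ to $G$. Pick a set of coset representatives $R$ for $H$ in $G$ with $0 \in R$ and define $W := \bigcup_{r \in R}(r + W_H)$. Then $W + C = \bigcup_{r \in R}(r + W_H + C) = \bigcup_{r \in R}(r + H) = G$, and any $C' \subsetneq C$ with $W + C' = G$ would, upon intersection with the coset $H$ (which receives contribution only from $r = 0$ because $W_H, C \subseteq H$), force $W_H + C' = H$, contradicting the previous paragraph. The main obstacle is the torsion case: for an infinite torsion $G$ one cannot assume a useful infinite cyclic subgroup, so the pullback of finite subgroups from $G/\langle C \rangle$ is needed to produce large finite subgroups of $G$ containing $C$. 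Once the right $H$ and $K$ are in hand, the lift-and-extend steps are essentially formal.
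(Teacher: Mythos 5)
Your proof is correct and takes essentially the same route as the paper's: split into the torsion and non-torsion cases, embed $C$ into a large finite quotient $K$ of a finitely generated subgroup $H$, apply Theorem~\ref{thm:finite} there, and transfer back---your lift-and-extend steps are exactly the paper's Lemma~\ref{lem:quotient} (pulling back along a homomorphism injective on $C$) and Lemma~\ref{lem:subtobig} (passing from a subgroup to the whole group via coset representatives), re-proved inline. One small quantitative slip: injectivity of $\pi$ on $C$ requires $N$ to exceed \emph{twice} the largest coordinate magnitude, since it is differences of coordinates that must avoid $N\mathbb{Z}\setminus\{0\}$ (e.g.\ coordinates $1$ and $-1$ collide when $N=2$); as $N$ is a free parameter chosen large anyway, this is immaterial.
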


In Section~\ref{sec:upper}, we investigate upper bounds on $T(n)$.  By examining groups $G$ with subgroups of certain sizes, we derive (as a corollary to Proposition~\ref{prop:lower-construction}) that
$$\liminf_{n \to \infty} \frac{T(n)}{\sqrt{n}} \leq \sqrt{2}.$$
We also obtain an upper bound on $T(G)$ for all finite groups $G$.
\begin{theorem}\label{thm:upperbound}
For every $\varepsilon > 0$, we have that $T(G) = O(|G|^{3/4 + \varepsilon})$. In particular, we have that $T(n) = O(n^{3/4 + \varepsilon})$.
\end{theorem}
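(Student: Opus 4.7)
The plan is to exhibit, for every finite abelian group $G$ of order $n$ and every $\varepsilon>0$, a subset $C\subseteq G$ of size at most $C_{\varepsilon}\,n^{3/4+\varepsilon}$ that is not a minimal complement for any $W\subseteq G$. The approach combines the structural characterization of minimality with an additive-combinatorial counting argument over ``witness data.''

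I would begin by unpacking minimality via witnesses. If $C$ is minimal for $W$, then for each $c\in C$ there exists a \emph{uniquely covered} $g_c\in G$ with $g_c-c\in W$ and $g_c-c'\notin W$ for every $c'\in C\setminus\{c\}$. The $g_c$ are pairwise distinct, so, writing $U=\{g_c:c\in C\}$, $\sigma\colon U\to C$ the bijection $g_c\mapsto c$, $M=\{u-\sigma(u):u\in U\}$, and $F=\{u-c:u\in U,\ c\in C\setminus\{\sigma(u)\}\}$, the constraints on $W$ are $M\subseteq W\subseteq G\setminus F$, $M\cap F=\emptyset$, and $W+C=G$. Whenever $|U-C|=n$ (which holds in the typical regime $k^2\gg n\log n$, where $k:=|C|$), the inclusion constraints collapse to $W=M$, and the coverage condition becomes $|M+C|=n$ (forcing $|M|\geq n/k$).

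Next, I would translate the compatibility into a sumset/incidence statement: a short double counting shows that $M\cap F=\emptyset$ is equivalent to $|M\cap(u-C)|=1$ for every $u\in U$. Since $\sum_{u\in U}|M\cap(u-C)|=k$, this is a tight ``quasi-tiling'' between $(M,U,C)$. Taking $C$ to be a uniformly random $k$-subset of $G$ with $k=\lceil n^{3/4+\varepsilon}\rceil$, the random variable $|M\cap(u-C)|$ is hypergeometric with mean $|M|k/n$, and the joint probability that it equals $1$ for every $u\in U$ is bounded (at the worst-case balance $|M|=n/k$) by roughly $(e^{-1})^{k}$. Pairing this with the number of relevant $(U,\sigma)$ configurations---counted not by the crude $\binom{n}{k}^{2}k!$ but rather by a refined dyadic decomposition according to the size $|M|$---should yield an expected number of valid witness structures that tends to $0$ as $n\to\infty$. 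The probabilistic method then produces the desired $C$.

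The main obstacle is making the count of witness structures $(U,\sigma)$ efficient enough to survive the compatibility probability: a naive union bound is too lossy, and one must stratify by $|M|$ and exploit the fact that $|M|$ controls the partition type of $C$ into the $\sigma$-fibers over $M$. The exponent $3/4$ emerges from optimizing the balance between this refined structural count (roughly $n^{k}\cdot(\text{partition factors})$) and the incidence-probability bound; the $\varepsilon$-loss absorbs the polylogarithmic slack in the concentration estimates for $|U-C|$ and for the individual incidences $|M\cap(u-C)|$.
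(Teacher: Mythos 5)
There is a genuine gap, and it sits exactly where you flag ``the main obstacle'': the first-moment bound over witness structures cannot be closed at the scale you are working on, and stratifying by $|M|$ does not repair it. Your witness data lives at scale $k=n^{3/4+\varepsilon}$: even your refined count of structures is of order $n^{k}$ (a witness $g_c\in G$ for each $c\in C$), i.e.\ $e^{k\log n}$ structures, while the compatibility probability you claim --- and the best one could hope for, since it is a conjunction of $k$ events each of probability roughly $e^{-1}$ --- is only $e^{-\Theta(k)}$. The expected number of valid structures is therefore at least $e^{k(\log(n/k)-O(1))}\to\infty$; the mismatch is a factor of $\log n$ \emph{in the exponent}, not a polylogarithmic slack that the $\varepsilon$ can absorb. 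There are two further problems. First, a conditioning issue: $\sigma$ is a bijection from $U$ onto $C$ itself, so a structure $(U,\sigma)$ \emph{determines} $C$ (namely $C=\{u-\tau(u):u\in U\}$ with $\tau(u)=u-\sigma(u)$); you cannot fix $(U,\sigma,M)$ and then treat $C$ as an independent uniform $k$-set, so the hypergeometric computation is not available as stated. Second, the collapse to $W=M$ requires $U-C=G$ for the particular witness set $U$ arising from the adversary's $W$, which is chosen \emph{after} seeing $C$. The bound $k^2\gg n\log n$ gives $U-C=G$ whp for each \emph{fixed} $U$, but it cannot hold for all $U$ simultaneously: for any $C$ with $|C|\le n/2$, choose a $k$-set $V$ disjoint from $C$ and any $g\in G$; then $U=V+g$ satisfies $g\notin U-C$.

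The paper's proof avoids all of this by never taking a union bound over objects of size $k$. It takes $A$ random with density $p=n^{-1/4+\varepsilon}$ and proves two statements whose union bounds range only over sets of size $\widetilde{O}(n^{1/4-\varepsilon})$: Lemma~\ref{l21} shows that whp no set $B$ of size $\frac{1}{4p}\log n$ satisfies $A+B=G$ (here the failure probability per fixed $B$, obtained from a $(B-B)$-separated set $T$, is $e^{-n^{1/4+\varepsilon-o(1)}}$, which beats the $e^{\widetilde{O}(n^{1/4-\varepsilon})}$ count of such $B$), hence every complement of $A$ is larger than this; and Lemma~\ref{l22} shows that whp every $D$ of size at least $(1-\varepsilon)\frac{1}{4p}\log n$ has $A+D$ covering all but $n^{3/4+\varepsilon/2}$ elements (via Hajnal--Szemer\'edi plus Chernoff, again with a union bound only over small $D$). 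The key mechanism you are missing is the deduction that converts these facts about \emph{small} sets into a statement about an arbitrary, possibly huge, partner $B$: if $A$ were minimal for $B$, partition $B$ into $t=\lceil 1/\varepsilon\rceil$ parts $B_i$ and apply Lemma~\ref{l22} to each $D_i=B\setminus B_i$; any element lying in every $A+D_i$ is covered at least twice in $A+B$, so at most $tn^{3/4+\varepsilon/2}<|A|$ elements are uniquely covered, contradicting the fact (which your proposal also uses) that minimality forces $|A|$ pairwise distinct uniquely covered elements. Without a substitute for this partition trick, an enumeration scheme over witness structures at scale $k$ cannot succeed.
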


The proof of Theorem~\ref{thm:upperbound} goes by showing that, with high probability, a random subset formed by including independently each element of $G$ with probability $p = \vert G \vert^{\varepsilon - 1/4}$ 
is not a minimal complement. The proof of Theorem~\ref{thm:upperbound} 
works for any $p > \vert G \vert^{\varepsilon - 1/4}$ that is bounded away from $1$. In particular, taking $p = 1/2$, we see that a subset chosen uniformly at random is with high probability not a minimal complement.

Say that a subset of an abelian group $G$ is \emph{$H$-invariant} for a subgroup $H$ of $G$ if it is invariant under translation by elements of $H$. Burcroff and Luntzlara \cite{Burcroff} have observed that an $H$-invariant subset $S \subseteq G$ is a minimal complement if and only if the image of $S$ in $G/H$ is a minimal complement. Thus, Theorems~\ref{thm:finite} and \ref{thm:upperbound} have immediate consequences for $H$-invariant subsets when $H$ has finite index. 

\begin{cor}
When ordered by minimal period, almost all periodic subsets of $\mathbb{Z}$ (i.e., subsets that are $n\mathbb{Z}$-invariant for some $n>0$) are not minimal complements. 
\end{cor}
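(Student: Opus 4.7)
The plan is to reduce the statement to a counting problem in the cyclic groups $\mathbb{Z}/d\mathbb{Z}$ via the Burcroff--Luntzlara equivalence and then invoke the remark following Theorem~\ref{thm:upperbound}. First I would set up the enumeration: a periodic subset of $\mathbb{Z}$ with minimal period exactly $d$ corresponds bijectively to a subset $\bar S \subseteq \mathbb{Z}/d\mathbb{Z}$ whose translation stabilizer in $\mathbb{Z}/d\mathbb{Z}$ is trivial. Let $f(d)$ be the number of such $\bar S$ and let $m(d) \le f(d)$ be the number that are additionally minimal complements in $\mathbb{Z}/d\mathbb{Z}$. By the Burcroff--Luntzlara observation quoted just before the corollary, a $d\mathbb{Z}$-invariant subset of $\mathbb{Z}$ is a minimal complement in $\mathbb{Z}$ iff its image in $\mathbb{Z}/d\mathbb{Z}$ is a minimal complement. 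Hence, ordering periodic subsets of $\mathbb{Z}$ by minimal period (with arbitrary tie-breaking inside each period class), the initial segment consisting of all subsets with minimal period $\le N$ has size $F(N) := \sum_{d \le N} f(d)$, of which $M(N) := \sum_{d \le N} m(d)$ are minimal complements. It will suffice to show $M(N)/F(N) \to 0$; partial initial segments ending inside the period-$N$ class are controlled by $M(N)/F(N-1)$, which is of the same order as $M(N)/F(N)$ by the estimates below.

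Next I would estimate $F(N)$ from below. Any subset of $\mathbb{Z}/N\mathbb{Z}$ with nontrivial stabilizer is invariant under some subgroup of prime order $p \mid N$, and the invariants of such a subgroup form a family of size $2^{N/p} \le 2^{N/2}$. Summing over the at most $\log_2 N$ prime divisors of $N$ gives
\[
f(N) \;\ge\; 2^N - (\log_2 N)\cdot 2^{N/2} \;=\; (1 - o(1))\cdot 2^N,
\]
so $F(N) \ge f(N) = \Theta(2^N)$, and by the same estimate $F(N-1) = \Theta(2^N)$.

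Finally, for $M(N)$ I would appeal to the remark after Theorem~\ref{thm:upperbound}: a uniformly random subset of a finite abelian group $G$ is a minimal complement with probability $o(1)$ as $|G| \to \infty$. Applied to $G = \mathbb{Z}/d\mathbb{Z}$, this gives $m(d) = o(2^d)$. A standard splitting argument---fix $\varepsilon > 0$, choose $D$ so that $m(d) < \varepsilon \cdot 2^d$ for all $d \ge D$, and use the trivial bound $m(d) \le 2^d$ for $d < D$---yields $M(N) \le 2^D + 2\varepsilon \cdot 2^N \le 3\varepsilon \cdot 2^N$ for $N$ large. Dividing by $F(N) = \Theta(2^N)$ and sending $\varepsilon \to 0$ finishes the proof. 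The argument has no real obstacle: the essential input is Theorem~\ref{thm:upperbound} (in fact, only the ``$p=1/2$'' consequence highlighted in the remark), and everything else is elementary counting of invariant subsets.
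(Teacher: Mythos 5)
Your proof is correct and takes essentially the same route as the paper: reduce to $\mathbb{Z}/d\mathbb{Z}$ via the Burcroff--Luntzlara observation, invoke the $p=1/2$ consequence of Theorem~\ref{thm:upperbound} to conclude that only $o(2^d)$ subsets of $\mathbb{Z}/d\mathbb{Z}$ are minimal complements, and show that subsets with a proper period are negligible (your bound $(\log_2 N)2^{N/2}$ versus the paper's $N2^{N/2}$). The only difference is that you fill in the bookkeeping the paper leaves implicit --- the exact-period enumeration, the splitting argument over $d \le N$, and the handling of partial initial segments --- which is elaboration rather than a different argument.
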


Indeed, the proof of Theorem~\ref{thm:upperbound} shows that almost all $n\mathbb{Z}$-invariant subsets are not minimal complements. An $n\mathbb{Z}$-invariant subset of $\mathbb{Z}$ with minimal period $n/k$ is determined by its intersection with $\{0, 1, \dotsc, n/k - 1\}$, so we see that all but at most $n2^{n/2}$ $n\mathbb{Z}$-invariant subsets have minimal period $n$, which implies the corollary.

In Section~\ref{sec:supplements}, we study ``dual'' problems about maximal supplements.  (For a fixed subset $W$, the problems of finding the smallest complement for $W$ and the largest supplement for $W$ are given by dual integer programs.)  It is tempting to hope that, in analogy with the minimal complement setting, every sufficiently small subset of a finite group is a maximal supplement.  This is not quite the case because every minimal supplement must satisfy a natural ``solidity'' condition: we say that a subset $C$ of an abelian group $G$ is \emph{solid} if $C$ is not properly contained in any set $D$ such that $D-D=C-C$.  It is in fact true that every sufficiently small solid subset is a maximal supplement.
\begin{theorem}\label{thm:suppfinite}
There is an absolute constant $b>0$ such that the following 
holds: Let $G$ be a finite abelian group, 
and let $C \subseteq G$ be a  non-empty solid subset.  
If $$\vert C \vert \leq b \left (\frac{\vert G \vert}{\log \vert G 
\vert} \right)^{1/4},$$ 
then $C$ is a maximal supplement in $G$. 
\end{theorem}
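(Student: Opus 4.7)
The plan is to construct $W \subseteq G$ satisfying (a) $(W - W) \cap (C - C) = \{0\}$ (so the translates $w + C$ for $w \in W$ are pairwise disjoint), and (b) $C + (W - W) = G$. After translating so that $0 \in C$, condition (b) is equivalent to saying that for every $c' \in G \setminus C$ there exists some $c \in C$ with $c' - c \in W - W$, i.e.\ that $C \cup \{c'\}$ fails to be a supplement for $W$.

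Solidity provides the key input. For each $c' \in G \setminus C$, since $(C \cup \{c'\}) - (C \cup \{c'\}) \supsetneq C - C$, there exists some $c(c') \in C$ such that the \emph{witness} $d(c') := c' - c(c') \notin C - C$. In particular $d(c') \neq 0$, and $d(c')$ is a ``safe'' candidate for $W - W$ in the sense that it does not conflict with (a). We fix such a choice of witness for each $c'$.

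I would then apply the probabilistic method with alteration. Set $n = |G|$, $k = |C|$, and $p := K\sqrt{\log n/n}$ for a sufficiently large absolute constant $K$. Include each $g \in G$ in $W$ independently with probability $p$, and form $W' \subseteq W$ by removing every element of $W$ that is incident to a \emph{bad} pair, i.e., a pair of elements of $W$ whose difference lies in $(C - C) \setminus \{0\}$. By construction, $W'$ satisfies (a). To ensure (b) for $W'$, it suffices to exhibit, for each $c'$, a \emph{green} pair: a pair $(x, x + d(c')) \in W \times W$ such that neither endpoint is incident to any bad pair. Any green pair survives the alteration, so $d(c') \in W' - W'$ as required.

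The main estimate is that, with positive probability, a green pair exists for every witness $d(c')$. A fixed pair $(x, x + d(c'))$ is green with probability $p^2 (1-p)^{O(k^2)} \approx p^2 e^{-O(pk^2)}$; under the hypothesis $k \leq b(n/\log n)^{1/4}$ one has $pk^2 \leq Kb^2$, a bounded constant, so the expected number of green pairs for a given $d(c')$ is $\Omega(K^2 \log n)$ once $b$ is small relative to $K$. The green-pair events are weakly dependent, since each depends only on an $O(k^2)$-sized ``dependency set,'' and two such events interact for at most $O(k^4)$ pairs $(x, x')$. An application of Janson's inequality (or a correlation/second-moment argument, handling the non-monotonicity of the green-pair event by first applying Janson to the monotone event $\{d(c') \in W - W\}$ and then controlling bad-pair counts separately) yields probability at most $\exp(-\Omega(K^2 \log n)) \leq n^{-3}$ that no green pair exists for a given witness, and a union bound over the at most $n$ witnesses completes the proof.

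The main obstacle is the non-monotonicity of the green-pair event and the corresponding need to bound the damage that bad-pair deletions inflict on representations of $d(c')$. The parameters must balance $np^2 \gtrsim \log n$ (so that enough representations exist) against $pk^2 = O(1)$ (so that, on average, only a bounded fraction of pair endpoints are deleted); these two constraints combine to give exactly $k^4 \lesssim n/\log n$, i.e., $k \leq b(n/\log n)^{1/4}$ for a suitable absolute constant $b > 0$.
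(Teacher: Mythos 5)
Your deterministic skeleton is correct: the characterization of maximal supplements via $(W-W)\cap(C-C)=\{0\}$ and $C+W-W=G$, the use of solidity to extract for each $c'\in G\setminus C$ a witness $d(c')=c'-c(c')\notin C-C$, and the parameter bookkeeping ($p=K\sqrt{\log n/n}$, $pk^2\leq Kb^2$, expected number of representing pairs $np^2=K^2\log n$) are all right, and you correctly identify that these constraints force $k\lesssim (n/\log n)^{1/4}$. The gap is in the one step that carries all of the difficulty: the claim that, for a fixed witness $d$, the probability that \emph{no} green pair survives is $\exp(-\Omega(K^2\log n))\leq n^{-3}$. Janson's inequality applies only to the monotone event that no pair $(x,x+d)$ lies in $W$ at all; the green-pair event requires two coordinates present \emph{and} $O(k^2)$ coordinates absent, so it is not of the canonical increasing form Janson handles. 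Your proposed repair, ``controlling bad-pair counts separately,'' does not close this. The global bad-pair count has expectation $\Theta(k^2np^2)=\widetilde{\Theta}(\sqrt{n})$, which dwarfs the mere $\Theta(\log n)$ available $d$-pairs, so no bound on the total number of bad pairs helps; the relevant quantity is $Y$, the number of $d$-pairs ruined by a nearby element of $W$, which indeed has small mean $O(Kb^2\, np^2)$, but what you need is the upper-tail bound $\Pr[Y\geq np^2/2]\leq n^{-3}$. That is an upper-tail estimate for a degree-$3$ polynomial of the Bernoulli variables: Markov gives only a constant, and a second-moment/Chebyshev argument (equivalently, second moment on the green-pair count, whose variance is comparable to its mean $\Theta(\log n)$) gives only $O(1/\log n)$ per witness --- hopeless against a union bound over up to $n$ witnesses. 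As written, the proof does not go through.

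The plan is rescuable, but with a tool you did not name. The green-pair events $A_x$ admit a genuine dependency graph (each $A_x$ is determined by the coordinates in a set $B_x$ of size $O(k^2)$, and the graph has maximum degree $O(k^4)$), and Suen's correlation inequality --- a Janson-type bound that requires no monotonicity, only a dependency graph --- gives $\Pr[\bigcap_x \overline{A_x}]\leq \exp(-\lambda+\Delta e^{2\delta})$ with $\lambda\geq np^2e^{-O(pk^2)}$, $\delta=O(k^4p^2)=O(K^2b^4)$, and $\Delta=O(np^3+nk^4p^4)=O\bigl((p+K^2b^4)np^2\bigr)$; taking $b$ small in terms of $K$ yields $n^{-\Omega(K^2)}$ per witness, which is exactly what you wanted. (A bare-hands alternative is a union bound over configurations of destroyed pairs, bucketed by how their dependency sets overlap.) For comparison, the paper sidesteps all of this: it proves Lemma~\ref{lem:supp-completion} (if $G\setminus(W-W)=(C-C)\setminus\{0\}$, then the solid set $C$ is a maximal supplement for $W$) and then invokes Theorem~\ref{thm:diff}, the result of \cite{Al} that every symmetric set $V\ni 0$ with $|V|\geq n-c\sqrt{n/\log n}$ equals $W-W$ for some $W$; since $|C-C|\leq b^2\sqrt{n/\log n}$, this applies directly, and it is precisely where the exponent $1/4$ comes from. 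In effect you are re-proving the cited difference-set theorem from scratch, and the re-proof is missing its hardest ingredient.
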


We also show that every finite non-empty solid subset of an infinite abelian group is a maximal supplement; the proof is completely different from (and in many ways simpler than) the proof for the finite setting.
\begin{theorem}\label{thm:suppinfinite}
Let $G$ be an infinite abelian group, and let $C$ be a finite non-empty solid subset of $G$. Then $C$ is a maximal supplement. 
\end{theorem}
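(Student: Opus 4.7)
The plan is to construct $W \subseteq G$ satisfying
\[
\text{(i) } (W - W) \cap (C - C) = \{0\} \quad \text{and} \quad \text{(ii) } C + (W - W) = G.
\]
Together these are equivalent to $C$ being a maximal supplement for $W$: (i) is the standard reformulation that the translates $W + c$ for $c \in C$ are pairwise disjoint, while given (i), the condition (ii) says exactly that for each $g \in G \setminus C$ some new difference $g - c$ (with $c \in C$) lies in $W - W$, so adjoining $g$ to $C$ destroys the supplement property. Elements of $C$ are trivially in $C + (W - W)$ since $0 \in W - W$ whenever $W$ is non-empty.

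Solidity of $C$ enters only through the following observation: for every $g \in G \setminus C$ there exists $c \in C$ with $g - c \notin C - C$, since otherwise $C \subsetneq C \cup \{g\}$ would have the same difference set as $C$.

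The construction proceeds by transfinite recursion along an enumeration $G = \{g_\alpha : \alpha < \kappa\}$ with $\kappa = |G|$. Set $W_0 = \{0\}$, and take unions at limit ordinals. At a successor stage $\alpha + 1$, if $g_\alpha \in C + (W_\alpha - W_\alpha)$ (in particular if $g_\alpha \in C$), put $W_{\alpha + 1} = W_\alpha$. Otherwise pick $c_\alpha \in C$ with $d_\alpha := g_\alpha - c_\alpha \notin C - C$ (possible by solidity), and adjoin to $W_\alpha$ a pair $u, v \in G$ with $u - v = d_\alpha$. The resulting $W_{\alpha+1} = W_\alpha \cup \{u, v\}$ still satisfies (i) provided
\[
u,\ u - d_\alpha \;\notin\; \bigcup_{w \in W_\alpha} \bigl(w + ((C - C) \setminus \{0\})\bigr),
\]
since the new difference $u - v = d_\alpha \notin (C - C) \setminus \{0\}$ is automatic. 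The union of these two forbidden sets has cardinality at most $2|W_\alpha|(|C|^2 - 1)$, and since we add at most two elements per stage we have $|W_\alpha| < \kappa$ throughout; hence the forbidden set is of strictly smaller cardinality than $|G|$, and a valid $u$ exists. Then $g_\alpha = c_\alpha + (u - v) \in C + (W_{\alpha + 1} - W_{\alpha + 1})$.

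Set $W := \bigcup_{\alpha < \kappa} W_\alpha$. Property (i) passes to the union because any difference $w_1 - w_2$ with $w_1, w_2 \in W$ lies in some $W_\alpha - W_\alpha$, and (ii) holds because each $g_\alpha$ is explicitly witnessed at stage $\alpha + 1$. The main technical obstacle is the set-theoretic bookkeeping when $G$ is uncountable, but the finite per-stage increment $2(|C|^2 - 1)$ keeps the forbidden set strictly smaller than $|G|$ at every stage; in the countable case this degenerates to an ordinary greedy construction over $\mathbb{N}$ with only finite forbidden sets.
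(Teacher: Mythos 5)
Your proof is correct, and its engine is the same as the paper's: a transfinite greedy construction that enlarges $W$ by pairs $\{u, u-d\}$ so as to place a prescribed difference $d$ into $W-W$, with a forbidden-set cardinality count ($<|G|$ at every stage) guaranteeing each step can be carried out. The difference is in the invariant you maintain and where solidity enters. The paper first isolates Lemma~\ref{lem:supp-completion}: if $G \setminus (W-W) = (C-C)\setminus\{0\}$, then $C$ is a maximal supplement for $W$ --- solidity is invoked exactly once, inside that lemma; its transfinite induction then enumerates $(G \setminus (C-C)) \cup \{0\}$ and forces every such element into $W-W$, so the difference set of the constructed $W$ is completely determined. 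You instead enumerate all of $G$ and enforce the weaker (but sufficient) conditions $(W-W)\cap(C-C)=\{0\}$ and $C+(W-W)=G$ directly, invoking solidity at each stage to produce $c_\alpha$ with $g_\alpha - c_\alpha \notin C-C$; your observation that solidity yields such a $c_\alpha$ for every $g_\alpha \notin C$ is exactly right, and the rest of your bookkeeping (symmetry of $C-C$ handling both signs of each new difference, $|W_\alpha| < \kappa$ throughout, chains at limit stages) checks out. What each route buys: yours skips the completion lemma, adds pairs only for elements not already covered, and is arguably the more direct verification of the maximal-supplement characterization; the paper's version pins down $W-W$ exactly, and Lemma~\ref{lem:supp-completion} is then reused in the finite-group case (Theorem~\ref{thm:suppfinite}), where $W$ is produced by Alon's difference-set theorem rather than by a greedy construction, so the factored structure pays for itself there. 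Both arguments also extend verbatim to the generalization stated after the theorem, with finiteness of $C$ replaced by the hypothesis that $S + C - C \neq G$ whenever $|S| < |G|$.
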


\subsection{Open questions}
Our results on minimal complements and maximal supplements raise several natural questions. It would be interesting to find the optimal bound in Theorem~\ref{thm:finite} and understand the asymptotic behavior of $T(n)$. Say that $f(n) = \widetilde{\Theta}(g(n))$ if $f = \Omega(g(n) (\log n)^a)$ and $f(n) = O(g(n) (\log n)^b)$ for some integers $a, b$. 
\begin{conj}
We have that $T(n) = \widetilde{\Theta}(\sqrt{n})$.  More generally, we have that $T(G) = \widetilde{\Theta}(\sqrt{|G|})$.
\end{conj}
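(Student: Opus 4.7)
The plan is to attack the conjecture from two sides, closing the gap between the lower bound $T(n) = \widetilde{\Omega}(n^{1/3})$ of Theorem~\ref{thm:finite} and the upper bound $T(n) = O(n^{3/4 + \varepsilon})$ of Theorem~\ref{thm:upperbound}.

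For the lower bound $T(G) = \widetilde{\Omega}(\sqrt{|G|})$, I would refine the probabilistic construction that underlies Theorem~\ref{thm:finite}. Given $C \subseteq G$ of size $k \approx \sqrt{|G|/\log |G|}$, sample $W$ by including each element of $G$ independently with probability $p = \Theta((\log |G|)/k)$, and require that two events hold simultaneously: $W + C = G$, and for each $c \in C$ there is a private witness $w_c \in W$ with $w_c + c - c' \notin W$ for every $c' \in C \setminus \{c\}$. The covering event fails at a fixed $g \in G$ with probability $(1-p)^k$, forcing $p \gtrsim (\log |G|)/k$; the private-witness event at a fixed $c$, pushed through a naive union bound over $c$, gives $k^2 \lesssim |G|/\log |G|$, which is precisely the conjectured threshold. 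The main obstacle is that the private-witness events at different $c$ are correlated through repeated differences in $C - C$, so a direct union bound loses a factor of $k$; circumventing this likely requires the Lov\'asz Local Lemma applied to a dependency graph controlled by $C - C$, or a second-moment argument that leverages Pl\"unnecke-type bounds on $|C - C|$.

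For the upper bound $T(G) = \widetilde{O}(\sqrt{|G|})$, I would sharpen the random-set argument behind Theorem~\ref{thm:upperbound} by decreasing the sampling probability to $p = \Theta((\log |G|)/\sqrt{|G|})$, so that $|C| \approx \sqrt{|G|}\log |G|$ with high probability. The task is to show that every complement $W$ for $C$ admits a redundant element $c \in C$, i.e., one with $(C \setminus \{c\}) + W = G$. For fixed $W$, a specific $c$ is essential exactly when some $g \in G$ is privately covered, meaning $g - c \in W$ and $g - c' \notin W$ for all $c' \in C \setminus \{c\}$, and any complement must satisfy $|W| \geq |G|/|C|$. The fundamental obstacle is the universal quantifier over $W$: a union bound over all $2^{|G|}$ subsets is hopeless, so the argument must either collapse the space of relevant $W$ (for instance, replacing each $W$ by a canonical minimal covering for the random $C$) or separate $W$ into structured and pseudorandom pieces and handle each using tools from additive combinatorics.

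I expect that resolving the conjecture, especially the lower bound, will require moving beyond the crude estimate $|C - C| \leq k^2$ used throughout the paper, and instead exploiting finer additive structure of $C$; analogous refinements would then be needed to sharpen Theorem~\ref{thm:suppfinite} for maximal supplements.
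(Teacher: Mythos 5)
The statement you are trying to prove is a \emph{Conjecture} in the paper: it appears in the ``Open questions'' subsection, and the paper proves only the two one-sided bounds $T(n) = \widetilde{\Omega}(n^{1/3}/(\log n)^{2/3})$ (Theorem~\ref{thm:finite}) and $T(G) = O(|G|^{3/4+\varepsilon})$ (Theorem~\ref{thm:upperbound}). So there is no proof in the paper to compare against, and your proposal---which by its own account defers its key steps to ``the Lov\'asz Local Lemma \ldots or a second-moment argument'' and to ``collapsing the space of relevant $W$ \ldots or structure vs.\ pseudorandomness''---is a research program, not a proof. Moreover, both halves of the program have concrete flaws beyond the gaps you acknowledge. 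For the lower bound, sampling $W$ i.i.d.\ with density $p$ cannot work for \emph{any} $p$, and the obstruction is not the correlations you point to. Coverage $W+C=G$ whp forces $pk \geq \log n - O(1)$ (else the expected number of uncovered elements $n(1-p)^k$ diverges); but once $pk \geq \log n - O(1)$, the expected number of private witnesses for a fixed $c \in C$ is $np(1-p)^{k-1} = O(p) = o(1)$, so each $c$ fails to have a private witness with probability $1-o(1)$ and minimality fails whp. Thus in every density regime one of your two required events fails with probability $1-o(1)$: a union bound over the two failure events can never give positive joint probability, and the Local Lemma is equally inapplicable since it needs the individual bad events (here ``$c$ has no private witness,'' of probability $1-o(1)$) to be unlikely. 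This expectation-level tension is exactly why the paper's Theorem~\ref{thm:finiteprecise} does \emph{not} sample $W$ i.i.d.: it plants one witness $g_i = w_i + c_i$ per element of $C$ and then takes $W$ to be essentially all of $G$ minus $\bigcup_i (g_i - C)$, a dense and highly non-random set, so the planted witnesses stay uniquely covered by construction; the $n^{1/3}$ bottleneck there is the conflict and coverage events $E_1, E_2, E_3$, a different obstacle from the one your plan addresses.

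For the upper bound, lowering $p$ to $\Theta(\log n/\sqrt{n})$ breaks the mechanism of Lemmas~\ref{l21} and~\ref{l22} structurally, not just technically. The independence in Lemma~\ref{l21} comes from the $n/m^2$ pairwise disjoint translates of a candidate complement $B$ of size $m$, and the union bound over $\binom{n}{m}$ candidates requires roughly $n(1-p)^m/m^2 \gg m \log n$; with $m \approx \frac{1}{p}\log n$, all of these constraints balance precisely at $p \approx n^{-1/4}$, which is where the exponent $3/4$ comes from. At $p \approx n^{-1/2}$ one has $m \approx \sqrt{n}$, only $n/m^2 = O(\mathrm{polylog}\, n)$ disjoint translates, hence per-candidate probability bounds of the form $\exp(-n^{-\Omega(1)})$---no bound at all---against $e^{\widetilde{\Theta}(\sqrt{n})}$ candidates; additionally, with only the guarantee $|B| \geq \frac{1}{4p}\log n$ of Lemma~\ref{l21}, the expected number of elements missed by $A + D_i$ is about $n^{3/4}$, which exceeds $|A| \approx \sqrt{n}\log n$, so the ``covered twice except on fewer than $|A|$ elements'' step fails even in expectation. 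You correctly identify the universal quantifier over $W$ as the fundamental obstacle, but ``canonical minimal covers'' and ``structured vs.\ pseudorandom decomposition'' are names for the missing idea rather than arguments, so the conjecture remains open after your proposal.
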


Our best constructions of small subsets of finite groups that are not minimal complements rely on the existence of subgroups of certain orders. In particular, our best upper bound on $T(p)$ for $p$ prime is $O(p^{3/4 + \varepsilon})$.
\begin{question}
Is the behavior of $T(n)$ different when $n$ is prime?  How much does $T(G)$ depend on the structure of $G$ in addition to the order of $G$?
\end{question}

We can also ask the analogous question for maximal supplements. Theorem~\ref{thm:suppfinite} shows that non-empty solid subsets of size $O((n/\log n)^{1/4})$ are maximal supplements, but it is not clear that this bound is anywhere near optimal.

\begin{question}
What is the optimal function in Theorem~\ref{thm:suppfinite}?
\end{question}

It is also interesting to consider the case of a random set where each element is included with probability $p$ and determine when such a set 
is a minimal complement. Although the property of being a minimal complement is not monotone, Theorems \ref{thm:finite} 
and \ref{thm:upperbound} (and their proofs) 
show that for small values of $p$ a random set is with high probability a minimal complement, and that for large values of $p$ a random set is with high probability not a minimal complement.

\begin{question}\label{question:randomsets}
How does the probability that a random subset of a finite group $G$ with parameter $p = p(|G|)$ is a minimal complement (or maximal supplement) vary with $p$? Is there a (sharp) threshold? 
\end{question}

\section{Preliminaries}\label{sec:preliminaries}
We begin with a few general results on the behavior of minimal complements under subgroups and quotients, which will be useful in the sequel.  Note first of all that the collection of minimal complements in a fixed group $G$ is invariant under translations by elements of $G$ and applications of automorphisms of $G$.

\begin{lemma}\label{lem:subtobig}
Let $G$ be an abelian group, and let $H$ be a subgroup of $G$. If the subset $C \subseteq H$ is a minimal complement in $H$, then it is also a minimal complement in $G$.
\end{lemma}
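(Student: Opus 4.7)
The plan is to lift the witnessing set $W$ from $H$ to $G$ by adding a complete set of coset representatives for $H$ in $G$. Concretely, I would fix (using choice if $[G:H]$ is infinite) a set $R \subseteq G$ of coset representatives for $H$ in $G$ that contains $0$, so that every element of $G$ admits a unique decomposition $g = r + h$ with $r \in R$ and $h \in H$. Let $W \subseteq H$ be a subset witnessing that $C$ is a minimal complement in $H$, and define $W' := W + R \subseteq G$.

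First I would check that $W'$ is a complement for $C$ in $G$: since $W + C = H$ by hypothesis, we have
\[
W' + C \;=\; R + (W + C) \;=\; R + H \;=\; G.
\]

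Next I would verify minimality. Suppose for contradiction that some $c \in C$ can be removed, i.e.\ $W' + (C \setminus \{c\}) = G$. Since $W$ witnesses minimality of $C$ in $H$, there exists $h_0 \in H$ with $h_0 \notin W + (C \setminus \{c\})$. By the assumption, we can write $h_0 = w + r + c'$ for some $w \in W$, $r \in R$, and $c' \in C \setminus \{c\}$. Because $W, C \subseteq H$, the element $w + c'$ lies in $H$, so $r = h_0 - (w + c') \in H \cap R$. Since $0 \in R$ and distinct elements of $R$ lie in distinct cosets of $H$, we have $H \cap R = \{0\}$, forcing $r = 0$ and hence $h_0 = w + c' \in W + (C \setminus \{c\})$, a contradiction.

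The only subtle point is arranging that the $R$-component is forced to be $0$ when decomposing an element of $H$; this is what makes the minimality in $H$ transfer cleanly to $G$, and it is ensured by taking $0$ to be the representative of the coset $H$. Apart from this bookkeeping, the argument is routine. I do not foresee a real obstacle beyond invoking the axiom of choice when $[G:H]$ is infinite.
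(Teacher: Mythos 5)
Your proposal is correct and follows essentially the same route as the paper: both lift $W$ by adding a set of coset representatives (your $R$, the paper's $K$) and deduce minimality from the fact that an element of $H$ written as $w + r + c'$ forces the representative $r$ to be the one lying in $H$. Your normalization $0 \in R$ just makes explicit the paper's observation about ``the unique element of $K \cap H$''; otherwise the arguments are identical.
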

\begin{proof}
Suppose $C$ is a minimal complement for $W$ in $H$. Let $K$ contain exactly one element from each coset of $H$. Then $C$ is a minimal complement for $W + K$ in $G$: indeed, $$(W+K)+C=K+(W+C)=K+H=G$$ shows that $C$ is a complement for $W+K$, and minimality follows from the observation that $((W+K)+C) \cap H$ is the translate of $W+C$ by the unique element of $K \cap H$. 
\end{proof}

\begin{lemma}\label{lem:quotient}
Let $C$ be a subset of an abelian group $G$, and let $\pi: G \to H$ be a surjective group homomorphism such that the restriction of $\pi$ to $C$ is injective.  If $\pi(C)$ is a minimal complement (in $H$), then $C$ is a minimal complement (in $G$).
\end{lemma}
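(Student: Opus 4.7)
The plan is to lift a witness subset from $H$ to $G$ via the preimage map and then use the injectivity of $\pi\vert_C$ to transfer minimality back.

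More concretely, by hypothesis there is a subset $W' \subseteq H$ for which $\pi(C)$ is a minimal complement, i.e.\ $W' + \pi(C) = H$ and $W' + (\pi(C) \setminus \{c'\}) \ne H$ for every $c' \in \pi(C)$. I would then define $W := \pi^{-1}(W') \subseteq G$ and claim that $C$ is a minimal complement for $W$ in $G$.

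For the complement property, I would observe that $W$ is $\ker\pi$-invariant, and hence so is $W+C = \bigcup_{c\in C}(W+c)$. Applying $\pi$ gives
\[
\pi(W+C) = \pi(W) + \pi(C) = W' + \pi(C) = H,
\]
and since $W+C$ is saturated under $\ker\pi$, this forces $W+C = \pi^{-1}(H) = G$. For minimality, fix $c \in C$ and apply $\pi$ to $W + (C \setminus \{c\})$; since $\pi\vert_C$ is injective, $\pi(C\setminus\{c\}) = \pi(C)\setminus\{\pi(c)\}$, so
\[
\pi\bigl(W + (C\setminus\{c\})\bigr) = W' + \bigl(\pi(C)\setminus\{\pi(c)\}\bigr),
\]
which is a proper subset of $H$ by the minimality of $\pi(C)$ as a complement for $W'$. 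Any element of $H$ missing from the right-hand side gives, via any preimage in $G$, an element of $G$ missing from $W+(C\setminus\{c\})$, proving minimality.

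The argument is essentially formal, and there is no real obstacle: the role of the injectivity of $\pi\vert_C$ is precisely to guarantee that removing a single element of $C$ corresponds to removing a single element of $\pi(C)$, which is what lets us import the minimality from $H$ back to $G$. The only sanity check worth being explicit about is that $W+C$ is $\ker\pi$-invariant (so that surjectivity of $\pi$ on $W+C$ is equivalent to equality with $G$); without this observation one would have to verify equality pointwise in each fiber.
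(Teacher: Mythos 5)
Your proof is correct and follows essentially the same route as the paper: both take $W = \pi^{-1}(W')$ as the witness set, use $\ker\pi$-invariance of $W+C$ to get the covering property, and use injectivity of $\pi\vert_C$ to transfer minimality (the paper phrases this last step contrapositively via a proper subset $D \subset C$, but the argument is identical).
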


\begin{proof}
Let $W \subseteq H$ be a subset for which $\pi(C)$ is a minimal complement.  Then we claim that $C$ is a minimal complement for $\pi^{-1}(W)$.  Indeed, it is clear that $C+\pi^{-1}(W)=G$ (since this set intersects every coset of $\ker(\pi)$ and is invariant under $\ker(\pi)$).  To see that $C$ is minimal, note that any proper subset $D \subset C$ with $D+\pi^{-1}(W)=G$ would give a proper subset $\pi(D) \subset \pi(C)$ that is a minimal complement for $W$.
\end{proof}

We now turn to the possible sizes of ``large'' minimal complements in finite groups.

\begin{prop}\label{prop:size}
Let $G$ be a finite abelian group, and let $W \subseteq G$. If $C$ is a minimal complement for $W$, then $$\vert C \vert \le \vert G \vert \frac{\vert W \vert }{2 \vert W \vert - 1}.$$ In particular, if $C \not= G$, then $\vert C \vert \le 2 \vert G \vert/3$. 
\end{prop}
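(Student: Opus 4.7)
The plan is a double-counting argument built around a \emph{private witness} extracted for each element of $C$. First, for each $c \in C$, minimality of $C$ guarantees an element $p_c \in G$ whose only representation as $c' + w$ with $c' \in C$ and $w \in W$ has $c' = c$ (write the corresponding $w$ as $w_c$); otherwise $C \setminus \{c\}$ would remain a complement for $W$. These private witnesses are automatically pairwise distinct, and uniqueness gives the key structural identity
\[
(p_c - W) \cap C = \{c\},
\]
so $(p_c - W) \setminus \{c\}$ consists of exactly $|W|-1$ elements, all lying in $G \setminus C$.

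Next, I would count the pairs $(c, g) \in C \times (G \setminus C)$ with $g \in p_c - W$ in two ways. Summing over $c \in C$ gives $|C|(|W|-1)$. Summing over $g \in G \setminus C$, the contribution from each $g$ equals $|\{c \in C : p_c \in g + W\}|$, which is at most $|g + W| = |W|$ since the $p_c$ are distinct. Combining the two counts yields
\[
|C|(|W| - 1) \;\leq\; |W|(|G| - |C|),
\]
which rearranges to $|C|(2|W|-1) \leq |G| |W|$, as desired. For the ``in particular'' clause, note that if $|W| = 1$, say $W = \{w_0\}$, then $C = G - w_0 = G$; hence $C \neq G$ forces $|W| \geq 2$, in which case the decreasing function $\tfrac{|W|}{2|W|-1}$ is bounded above by its value at $|W| = 2$, namely $\tfrac{2}{3}$.

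The main conceptual step is choosing the right double count. A first attempt might try to compare $|C|$ to the number of uniquely-covered elements of $G$ directly, but that route produces only the trivial bound $|C||W| \geq |G|$. The crucial observation is that the $|W|-1$ ``non-private'' neighbors in $(p_c - W) \setminus \{c\}$ are not free to land anywhere in $G$: they are forced into $G \setminus C$, and it is this restriction that injects the extra factor of $|W|$ on the right-hand side and powers the inequality.
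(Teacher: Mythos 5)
Your proof is correct and is essentially the paper's argument: minimality yields pairwise distinct private witnesses, and a double count using the trivial bound of at most $|W|$ representations per element gives $|C|(|W|-1) \le |W|(|G|-|C|)$, which is exactly the paper's inequality $|C|\cdot|W| \le |C| + (|G|-|C|)|W|$ rearranged. The only cosmetic difference is that the paper counts all representation pairs $(c,w)$ and splits $G$ into witnesses and non-witnesses, while you count incidences between the translates $p_c - W$ and $G \setminus C$; your explicit handling of the $|W|=1$ case for the ``in particular'' clause is also fine (the paper leaves it implicit).
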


\begin{proof}
The minimality of $C$ guarantees that for each $c \in C$, there is an element $x(c) \in G$ such that $x(c) \in c + W$ but $x(c) \not \in c' + W$ for any $c' \not= c$.  (Note that these $x(c)$'s are pairwise distinct.)  Let $X=\cup_{c \in C} \{x(c)\}$.  Each of the remaining $|G|-|C|$ elements $g \in G \setminus X$  can be expressed as the sum of an element of $C$ and an element of $W$ in at most $\vert W \vert$ ways. We thus have
$$\vert C \vert \cdot \vert W \vert \le \vert C \vert + (\vert G \vert - \vert C \vert) \vert W \vert,$$
and rearranging gives the result.
\end{proof}

In particular, we see that any $C$ of size $3|G|/5<|C| \leq 2|G|/3$ can be a minimal complement only for a $W$ of size $2$, and it is possible to deduce an explicit structure theorem for minimal complements of this size.  A set $C$ is a minimal complement for some $W=\{w_1, w_2\}$ if and only if it is also a complement for $W-w_1=\{0, w_1-w_2\}$, so we may restrict our attention to sets $W$ containing the identity.  The following characterization of minimal complements of $W=\{0,a\}$ holds in all abelian groups $G$ but is most interesting when $G$ is finite.

\begin{prop}\label{prop:size2}
Let $G$ be an abelian group, and let $W = \{0, a\}$ for some non-zero $a \in G$. The set $C \subseteq G$ is a minimal complement for $W$ if and only if for every $g \in G$, the set $C$ contains at least one of $g$ and $g+a$ but does not contain all of $g$, $g+a$, and $g+2a$.  This condition is equivalent to requiring that the intersection of $C$ with any coset of the cyclic group generated by $a$ neither miss any two consecutive elements nor contain any three consecutive elements.
\end{prop}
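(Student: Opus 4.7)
The plan is to unpack the two defining conditions separately---that $C$ is a complement for $W=\{0,a\}$ and that $C$ is minimal with this property---and translate each into a local statement about $a$-adjacencies. Because $W$ involves only the shift by $a$, both resulting conditions depend only on the action of $+a$ and therefore decompose coset-by-coset of $\langle a \rangle$; this is what makes the equivalent coset reformulation at the end of the proposition automatic once the first half is established.

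First I would handle the complement condition. Since $C+W = C \cup (C+a)$, the equality $C+W=G$ says that every $g\in G$ lies in $C$ or in $C+a$; after reindexing $g \mapsto g+a$, this is equivalent to the condition that for every $g\in G$, at least one of $g$ and $g+a$ lies in $C$. This is the ``no two consecutive missing'' half of the statement.

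Next I would characterize minimality. The key observation is that a given $c\in C$ covers exactly the two elements $c$ and $c+a$, so $c$ is essential (meaning $C\setminus\{c\}$ fails to be a complement) iff at least one of these two elements has $c$ as its unique cover from $C$. That happens iff $c-a\notin C$ (so that $g=c$ is not also covered via $(c-a)+a$) or $c+a\notin C$ (so that $g=c+a$ is not also covered via $(c+a)+0$). Hence $C$ is minimal iff no $c\in C$ has both $c-a\in C$ and $c+a\in C$, i.e., iff $C$ contains no three consecutive elements $g,g+a,g+2a$. Combining this with the complement characterization gives the first half of the proposition, and the coset reformulation is then immediate because both conditions reference only $+a$.

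I do not foresee a significant obstacle; the argument is essentially a careful bookkeeping of covering relations. The only point warranting a sentence of care is the degenerate situation where $a$ has order $\leq 2$, in which case a coset of $\langle a \rangle$ has size $\leq 2$ and the ``three consecutive'' forbidden pattern partially collapses; one checks directly that the two stated conditions remain consistent and reduce, in the $2a=0$ case, to requiring that $C$ meet each coset in exactly one element.
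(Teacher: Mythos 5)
Your proof is correct and follows essentially the same route as the paper: the heart of both arguments is the observation that an element $c \in C$ is essential precisely when $c$ or $c+a$ is uniquely represented in $C+W$, i.e., when $c-a \notin C$ or $c+a \notin C$, which yields the ``no three consecutive, no two consecutive missing'' characterization. The only difference is organizational --- the paper first decomposes $G$ into cosets of $\langle a \rangle$ and runs this analysis inside the cyclic subgroup, whereas you run it globally in $G$, which works equally well (and, as you note, the degenerate cases $2a=0$ and $3a=0$ cause no trouble either way).
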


\begin{proof}
Let $H$ be the subgroup generated by $a$. Since $W \subseteq H$, we know that $C$ is a minimal complement for $W$ if and only if the following holds for each coset $g+H$: first, $W+(C \cap (g+H))=g+H$; and second, the translate $(C \cap (g+H))-g$ is a minimal complement for $W$ in $H$.  The first condition is satisfied if and only if $C$ contains at least one of $g$ and $g-a$ for every $g\in G$.

For the second condition, consider a complement $D$ for $W$ in $H$.  If there exists $x \in H$ such that $x, x+a, x+2a \in D$, then $D \setminus \{x+a\}$ is also a complement for $W$ in $H$ (which means that $D$ is not minimal).  Suppose, on the contrary, that no such $x$ exists.  Let $d \in D$.  Then either $d+a$ or $d-a$ is not in $D$.  In the former case, the sum $d+a$ is uniquely expressed in $D+W$; in the latter case, the sum $d+0$ is uniquely expressed in $D+W$; so we conclude that $D$ is minimal.  This completes the proof.
\end{proof}

Finally, we mention that short arithmetic progressions in finite groups are minimal complements.  Recall from Proposition~\ref{prop:size} that $G$ does not contain any minimal complements of size strictly between $2|G|/3$ and $|G|$; the following proposition, together with this fact, tells us that the sizes of the minimal complements in $\mathbb{Z}/n\mathbb{Z}$ are exactly $1, 2, \ldots, \lfloor 2n/3 \rfloor, n$.

Before proceeding to the proof, we introduce a useful piece of terminology.  Suppose $C$ is a complement for $W$ in some group $G$.  We say that an element $c \in C$ is \emph{essential} (for this $W$) if $C \setminus \{c\}$ is no longer a complement for $W$.  Clearly, $C$ being a minimal complement for $W$ is equivalent to every element $c \in C$ being essential.

\begin{prop}\label{prop:apcyclic}
Let $G$ be an abelian group of order $n$, and let $C \subseteq G$ be an arithmetic progression of length $k$.  Let $H$ be the subgroup generated by $C$, and write $\vert H \vert = m$. The set $C$ is a minimal complement if and only if
$$k \le \frac{2nm}{2n + m} \quad \text{or} \quad k=m.$$
\end{prop}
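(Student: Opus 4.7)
The plan is to analyze $C$ via the cosets of $H=\langle C\rangle$. After translating so that $0\in C$, write $C=\{0,d,2d,\dots,(k-1)d\}$ with $H=\langle d\rangle$ of order $m$, and fix coset representatives $v_0=0,v_1,\dots,v_{r-1}$ of $H$ in $G$ (so $r=n/m$). For any $W\subseteq G$, decompose $W_i=W\cap(v_i+H)$ and let $\widetilde W_i=W_i-v_i\subseteq H$. Since $C\subseteq H$, a direct check shows $C+W=G$ iff $C+\widetilde W_i=H$ for every $i$, and $c\in C$ is essential for $W$ iff $c$ is essential for some $\widetilde W_i$ in $H$. Hence $C$ is a minimal complement in $G$ iff one can choose $r$ complements $\widetilde W_1,\dots,\widetilde W_r$ of $C$ in $H$ such that every $c\in C$ is essential for at least one of them.

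Identifying $H\cong\mathbb{Z}/m\mathbb{Z}$ with $C=\{0,1,\dots,k-1\}$, the main lemma characterises essentialness via the cyclic gap sequence $(g_1,\dots,g_s)$ of a complement $\widetilde W$ (with $\sum g_i=m$, each $g_i\le k$): an element $c\in C$ is essential for $\widetilde W$ iff some consecutive pair satisfies $g_i\ge k-c$ and $g_{i+1}\ge c+1$. Equivalently, each pair $(g_i,g_{i+1})$ contributes an essential $c$-interval $[k-g_i,g_{i+1}-1]$ of length $\max(0,g_i+g_{i+1}-k)$. This follows by direct analysis of when $x=c+w$ is uniquely covered in $C+\widetilde W$.

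For necessity, I claim that any single $\widetilde W$ makes at most $2(m-k)$ elements of $C$ essential (assuming $k<m$). Summing interval lengths over cyclic pairs gives $\sum_i\max(0,g_i+g_{i+1}-k)\le 2m-Tk$, where $T$ counts pairs with sum exceeding $k$; when $T\ge 2$ this is at most $2(m-k)$. For $|\widetilde W|=2$ one has $T=2$ automatically. For $|\widetilde W|\ge 3$ with $T\le 1$, the single large pair has sum at most $m-(s-2)\le m-1$ (the remaining $s-2$ gaps are each $\ge 1$), giving essential count at most $m-1-k<2(m-k)$. Pigeonholing $k=|C|\le r\cdot 2(m-k)$ and substituting $r=n/m$ yields exactly $k\le 2nm/(2n+m)$.

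For sufficiency, $k=m$ is handled by taking $W$ to be any transversal of $H$ in $G$. For $k\le 2m/3$, a single $\widetilde W\subseteq H$ suffices---for instance $\widetilde W=\{0,k\}$ when $m/2\le k\le 2m/3$ (its two length-$(m-k)$ essential intervals tile $[0,k-1]$), or $\widetilde W=\{0,k,2k,\dots,qk\}$ with $q=\lfloor(m-1)/k\rfloor$ when $k<m/2$ (which contains two consecutive gaps of size $k$, immediately making all of $C$ essential)---and Lemma~\ref{lem:subtobig} lifts this to $G$. The core case is $2m/3<k<m$ with $k\le 2nm/(2n+m)$: here each $\widetilde W_i=\{0,m-k+s_i\}$ for an offset $s_i\in[0,2k-m]$ is a valid complement whose essential $c$-set is the union of two length-$(m-k)$ intervals with left endpoints $s_i$ and $2k-m-s_i$, placed symmetrically in $[0,2k-m]$. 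The inequality $k\le 2nm/(2n+m)$ is equivalent to $2r(m-k)\ge k$, which is precisely enough room to choose the $s_i$ so that the $2r$ intervals cover $[0,k-1]$; spacing them roughly uniformly in $[0,(2k-m)/2]$ works. Assembling $W=\bigsqcup_i(v_i+\widetilde W_i)$ gives the minimal complement. The main obstacle is verifying that this covering can be realised under the symmetric-pairs constraint, which reduces to a short interval-scheduling argument but is exactly where the bound $k\le 2nm/(2n+m)$ becomes tight.
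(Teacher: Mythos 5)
Your proof is correct and follows essentially the same route as the paper's: decompose along cosets of $H$, lift a single-coset complement via Lemma~\ref{lem:subtobig} when $k \le 2m/3$, use two-element complements $\{0, m-k+s_i\}$ with offsets spaced by $m-k$ when $2m/3 < k < m$ (your covering condition $2r(m-k)\ge k$ is exactly the paper's hypothesis, and uniform spacing does realize it), and for necessity count that each coset can make at most $2(m-k)$ elements of $C$ essential. The only notable difference is packaging: you prove necessity self-contained via your cyclic gap-sequence lemma, which applies only to progressions, whereas the paper invokes its more general Proposition~\ref{prop:lower-construction}, which gives the same $2(m-k)$-per-coset bound for arbitrary subsets of $H$ by a simpler overlap count.
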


\begin{proof}
The case $k=m$ is trivial, so we restrict our attention to $k<m$.  We first treat the special case $G = H = \mathbb{Z}/n\mathbb{Z}$. Since the property of being a minimal complement is invariant under translation and automorphisms, we may assume that $C = \{0, 1, \dotsc, k - 1\}$.

If $k \leq n/2$, then we claim that $C$ is a minimal complement for $W = \{0, k, k+1, \ldots, n-k\}$.  Indeed, $W+C=\mathbb{Z}/n\mathbb{Z}$, and minimality follows from the observation that for every proper subset $D \subset C$, the sumset $D+W$ fails to cover some element of the interval $0,1,\ldots, k-1$.  If instead $n/2<k \leq 2n/3$, then we claim that $C$ is a minimal complement for $W=\{0,k\}$.  Again, it is clear that $W+C=\mathbb{Z}/n\mathbb{Z}$, and minimality is only slightly harder.  For $0 \leq r<n-k$, the element $k+r$ is uniquely represented in $W+C$ as $k+r$, which means that $r$ is essential.  For $n-k \leq r \leq k-1$, the element $r$ is uniquely represented in $W+C$ as $0+r$, which again means that $r$ is essential, so we conclude that $C$ is minimal.  Proposition~\ref{prop:size} tells us that the bound $k \leq 2n/3$ is in fact tight.

We now treat the general case.  Since $H$ is clearly cyclic, we may identify it with $\mathbb{Z}/m\mathbb{Z}$.  By the translation invariance of minimal complements, we may assume that $C=\{0,1,\ldots, k-1\}$ as a subset of $H$. If $k \le 2m/3$, then the previous paragraph ensures that $C$ is a minimal complement in $H$. By Lemma~\ref{lem:subtobig}, $C$ is then also a minimal complement in $G$. We thus restrict our attention to $k>2m/3$. 

We construct $W$ by carefully including two elements of each coset of $H$. Choose representatives $g_1+H, \ldots, g_{n/m}+H$ for $G/H$, so that $G = \cupdot_{i} (g_i + H)$.  Note that $$(W \cap (g_i+H))+C \subseteq g_i+H.$$  Suppose $W_i=W \cap (g_i+H)=\{g_i, g_i+s_i\}$ for some choice of elements $s_i$.  We ensure that $W_i+C=g_i+H$ (i.e., $\{0,s_i\}+C=H$) by picking $s_i$ to satisfy $m-k \leq s_i \leq k$.  Writing $$\{0,s_i\}+C=\{0, 1, \ldots, k-1\} \cup \{s_i, s_i+1, \ldots, s_i+k-1\},$$ we see that every $r \in C$ satisfying $s_i+k-m \leq r \leq s_i-1$ or $k-s_i \leq r \leq m-s_i-1$ is essential.  Now, take $s_i=i(m-k)$ for every $1 \leq i \leq \left\lceil \frac{k}{2(m-k)} \right\rceil$ (where the latter quantity is at most $n/m$ by the hypothesis on the size of $k$ in the statement of the proposition), and set every remaining $s_i=1$ for larger $i$.  It is now easy to see that every $r \in C$ is essential, so we conclude that $C$ is a minimal complement for this $W$, as desired.

Proposition~\ref{prop:lower-construction} (proven in Section~\ref{sec:upper}) gives the converse.
\end{proof}

\section{Small sets are minimal complements}\label{sec:small}

We prove Theorem~\ref{thm:finite} by establishing the following more precise statement. Theorem~\ref{thm:finite} follows from setting $s=\lceil 3/2 \log n \rceil$. In the interest of readability, we have not optimized the bound in the theorem below as being more careful does not give a better exponent in Theorem ~\ref{thm:finite}. The proof is probabilistic and bears some resemblance to the argument in \cite{Al} that every sufficiently large subset of $\mathbb{Z}/p\mathbb{Z}$ is a sumset of the form $A+A$.
\begin{theorem}\label{thm:finiteprecise}
Let $G$ be an abelian group of order $n$, and let $C \subseteq G$ be a non-empty subset of order $k$. 
If there exists a positive integer $s$ such that
\begin{equation}
\label{e11}
\frac{s^2k^3}{n}+ \frac{e^s k^{3s}}{n^{s-1}}
+k\left(\frac{s^2 k^3}{n}\right)^s <1,
\end{equation}
then $C$ is a minimal complement  in $G$.
\end{theorem}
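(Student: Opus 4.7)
The plan is to prove Theorem~\ref{thm:finiteprecise} by the probabilistic method: choose $W \subseteq G$ by including each element of $G$ independently with probability $p$, where $p$ will be optimized as a function of $s$, $k$, and $n$. I want to show that with strictly positive probability, $C$ is a minimal complement for $W$. Recall that this is equivalent to demanding both that $C+W = G$ and that every $c \in C$ is essential for $W$, meaning that there exists $w \in W$ with $(w+D_c) \cap W = \emptyset$, where $D_c = \{c - c' : c' \in C \setminus \{c\}\}$ is a subset of $G \setminus \{0\}$ of size $k-1$.

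The covering event is controlled by the standard union bound $\Pr[C+W \ne G] \le n(1-p)^k \le n e^{-pk}$; I expect $p$ to be chosen so that $pk$ is roughly $s\log(n/ek^3)$, which makes this upper bound at most the middle term $n(ek^3/n)^s = e^s k^{3s}/n^{s-1}$. For the essentiality event, I fix $c$ and study $Y_c = |\{w \in W : (w+D_c) \cap W = \emptyset\}|$, so that $c$ is non-essential iff $Y_c = 0$. One computes $E[Y_c] = np(1-p)^{k-1}$ directly, and analyzes the covariance structure: two private-witness indicators at $w$ and $w'$ are mutually exclusive (hence negatively correlated) when $w' = w+d$ for some $d \in D_c$, and they are positively correlated by at most $tp^3$ with $t = |(w+D_c)\cap(w'+D_c)|$ satisfying $\sum_{w'} t \le |D_c|^2 < k^2$, when $w'-w$ lies in $(D_c - D_c) \setminus (D_c \cup -D_c \cup \{0\})$. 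This yields $\mathrm{Var}(Y_c) \le np(1-p)^{k-1} + nk^2 p^3$, and the ratio $\mathrm{Var}(Y_c)/E[Y_c]^2$ is of order $s^2 k^3/n$ for the intended $p$, matching the first term of the hypothesis.

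To produce the $s$-fold improvement needed for the third term, the plan is to replace a one-shot Chebyshev inequality by an $s$-th moment (or factorial-moment) argument, bounding the per-$c$ failure probability by $(s^2 k^3/n)^s$; union-bounding over the $k$ choices of $c$ then gives the third term $k(s^2 k^3/n)^s$. The main obstacle is controlling the $s$-wise joint events: one has to show that $s$-tuples of private-witness events decouple into an essentially independent product, which requires bounding the combinatorial contribution of tuples with many pairwise overlaps using the structural estimate $|D_c - D_c| \le k^2$. Once the three error terms---covering failure contributing $e^s k^{3s}/n^{s-1}$, the baseline correlation ratio contributing $s^2 k^3/n$, and the iterated essentiality bound contributing $k(s^2 k^3/n)^s$---sum to less than $1$, some realization of $W$ makes $C$ a minimal complement, proving the theorem.
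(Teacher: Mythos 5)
There is a genuine gap, and it is structural rather than technical: for an i.i.d.\ Bernoulli($p$) set $W$, the coverage requirement and the essentiality requirement pull $p$ in opposite directions, and no choice of $p$ lets your union bound close. Note that $E[Y_c]=np(1-p)^{k-1}=\frac{p}{1-p}\,n(1-p)^{k}$, i.e.\ the expected number of private witnesses is (up to the factor $\frac{p}{1-p}$) the \emph{same quantity} as your coverage-failure bound. So if $n(1-p)^k<1$, then for any $p\le 1/2$ you get $E[Y_c]<\frac{p}{1-p}\le 1$, and hence $\Pr[Y_c=0]\ge 1-E[Y_c]>1-\frac{p}{1-p}$: a fixed $c$ fails to be essential with probability close to $1$ (your intended $p\approx s\log(n/(ek^3))/k$ is tiny in the regime $k\approx n^{1/3}$, and note $s\ge 2$ is forced since $s=1$ never satisfies \eqref{e11}; then the hypothesis $e^sk^{3s}/n^{s-1}<1$ gives $E[Y_c]\approx p\cdot e^sk^{3s}/n^{s-1}<p<1$). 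Consequently your claim that $\mathrm{Var}(Y_c)/E[Y_c]^2=O(s^2k^3/n)$ is false --- the diagonal term alone contributes $1/E[Y_c]>1/p>1$ --- and no second-moment, $s$-th-moment, or factorial-moment refinement can show $\Pr[Y_c=0]$ is small, because that probability genuinely is near $1$; already $\sum_{c}\Pr[Y_c=0]\ge k\bigl(1-\tfrac{p}{1-p}\bigr)\ge 1$ for $k\ge 2$ and $p<1/3$. Pushing the arithmetic further, making both union-bound terms simultaneously less than $1$ forces $E[Y_c]>1-1/k$ and $E[Y_c]<\frac{p}{1-p}$, hence $p\ge\frac{k-1}{2k-1}\approx\frac12$ and then $n\ge\frac{1}{2p}(1-p)^{-(k-1)}$, i.e.\ $k=O(\log n)$: your scheme cannot reach anywhere near the $n^{1/3}$-scale bound the theorem requires.

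The paper escapes this tension by not taking $W$ i.i.d.\ at all: it \emph{plants} the private witnesses first and only then completes $W$. For each $c_i$ it draws $s$ random anchors $w_i^{(p)}$ (so only $sk$ random elements in total) and sets $g_i^{(p)}=w_i^{(p)}+c_i$; after excluding three bad events it selects one good anchor per $i$, puts those $k$ elements into $W$, and then adds to $W$ \emph{every} $w\in G$ such that $w+C$ avoids all $k$ selected points $g_i^{(p)}$. Minimality is then automatic (each selected $g_i^{(p)}$ is represented only as $w_i^{(p)}+c_i$, by the fill rule together with the event $E_1$), and coverage --- the part you handled by the crude bound $n(1-p)^k$ --- is deduced \emph{deterministically}: if some translate $z-C$ missed $W$, it would have to be covered by the union of the $k$ selected translates $g_i^{(p)}-C$, which is ruled out by the events $E_2$ (no $z-C$ meets $s$ of the candidate translates) and $E_3$ (for each $i$, some candidate $p$ admits no dangerous large-overlap configuration). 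The three terms in \eqref{e11} are precisely the bounds on $\Pr[E_1]$, $\Pr[E_2]$, $\Pr[E_3]$; they do not correspond to a coverage bound, a Chebyshev ratio, and a higher-moment bound as in your plan. Any salvage of a ``random $W$'' proof must likewise decouple uniqueness from coverage (plant first, then saturate), rather than tune $p$.
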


\begin{proof}
Write $C=\{c_1, c_2, \ldots ,c_k\}$, and fix some $s$ such that \eqref{e11} holds.  For each $i \in [k]$, choose $s$ (not necessarily distinct) elements $w_i^{(1)}, w_i^{(2)} , \ldots, w_i^{(s)}$ uniformly and independently at random from $G$.  For each $i \in [k]$ and $p \in [s]$, set $$g_i^{(p)} =w_i^{(p)}+c_i.$$
We will find a subset $W$ for which $C$ is a minimal complement as follows.  First, for each $i$, we will choose one element $w_i^{(p)}$ to include in $W$ (as described below).  Next, we will include in $W$ every $w \in G$ such that $w+C$ does not contain any of the $s$ chosen elements $g_i^{(p)}=w_i^{(p)}+c_i$.  We will have chosen the $w_i^{(p)}$'s so as to make the corresponding $g_i^{(p)}$'s pairwise distinct; this condition will guarantee the minimality of $C$ because $W+(C \setminus \{c_i\})$ does not contain $g_i^{(p)}$.  The main part of the proof is showing that $W+C=G$ with strictly positive probability.

Let $E_1$ be the event that  there are distinct pairs
$(i,p)$ and $(j,q)$ so that $g_i^{(p)}  \in w_j^{(q)} +C$.
There are fewer than $(sk)^2 $ choices for the pairs $(i,p)$ and $(j,q)$,
and for each fixed choice the probability that $g_i^{(p)} \in w_j^{(q)}+C$ is exactly $k/n$, so
\begin{equation}
\label{e21}
\mathbb{P}(E_1) \leq \frac{s^2k^3}{n}.
\end{equation}

Next, let $E_2$ be the event that there exist an element $z \in G$ and $s$ distinct
pairs $(i,p)$ such that each $(g_i^{(p)} -C) \cap (z-C) \neq \emptyset$.
There are 
$$
n \cdot {{sk} \choose s} \leq n (ek)^s
$$
choices for the element $z$ and the $s$ distinct pairs of indices $(i,p)$.
For each such choice, the probability that 
$(g_i^{(p)} -C) \cap (z-C) \neq \emptyset$ for all of the chosen $g_i^{(p)}$'s is at most
$(k^2/n)^s$: the choices of the elements $g_i^{(p)}$ are independent,
and for each fixed $(i,p)$ the probability of such a non-empty intersection
is the probability that $g_i^{(p)} \in z-C+C$, where this set has cardinality smaller than $k^2$. Therefore
\begin{equation}
\label{e22}
\mathbb{P}(E_2)\leq \frac{e^s k^{3s}}{n^{s-1}}.
\end{equation}

Now, order the elements of each translate $z-C$ of $C$ according to the numbering of the elements of $C$, that is, $z-c_1,z-c_2, \ldots, z-c_k$.  Let $E_3$ be the event that there is an index $i \in [k]$ such that the following holds: for each $p \in [s]$, there are $z \in G$, $j \neq i$
in $[k]$, and $q \in [s]$ such that both
\begin{equation}
\label{e231}
\frac{k}{s} < \left| (g_i^{(p)} -C) \cap (z-C)\right| < k
\end{equation}
and 
\begin{equation}
\label{e232}
(g_j^{(q)} -C) ~~\mbox{contains the first element of}~~
(z-C) \setminus (g_i^{(p)} -C).
\end{equation}
There are $k$ possibilities for the index $i$.  Fix such an $i$, and
expose the random elements $g_j^{(q)}$ for all $j \neq i$ and for all
$q$. The union of all such translates $g_j^{(q)} -C$ (call it $Y$) has cardinality 
smaller than $sk^2$. For each index $p$ and each
choice of the element $g=g_i^{(p)}$, let $Z=Z(g)$ denote the set of
elements $z \in G$ for which (\ref{e231}) holds. For each $z \in Z(g)$, let $u=u(g)$ 
be the first element of $z-C$ (in the ordering fixed above) not covered by $g-C$, and let
$U=U(g)$ be the set of all such first elements as $z$ ranges over $Z(g)$.  When $z \in G$ is chosen uniformly at random, the expected size of $(g-C) \cap (z-C)$ is $k^2/n$, so Markov's Inequality tells us that $Z(g)$ (and hence also $U(g)$) has size at most $sk$.  Note that for any $g_1, g_2 \in G$, we have the shifts $Z(g_2)=Z(g_1)+g_2-g_1$ and $U(g_2)=U(g_1)+g_2-g_1$.

The existence of $j \neq i$, $z$ and $q$ satisfying (\ref{e231}) and 
(\ref{e232}) for the index $p$ is equivalent to the condition that 
the random set $U(g_i^{(p)})$ intersects the  set $Y$. Since
$Y$ is a fixed set of size at most $sk^2$ (the elements
$g_j^{(q)}$ already having been exposed) and $U(g_i^{(p)})$ is 
a random shift of a set of size at most $sk$, the probability that
they intersect is at most $s^2k^3/n$. The events corresponding to
distinct indices $p$ are independent, so the probability
of finding such an intersection for all values of $p \in[s]$
is at most $(s^2k^3/n)^s$. As there are $k$ choices for
$i$, we have
\begin{equation}
\label{e23}
\mathbb{P}(E_3) \leq k \left(\frac{s^2k^3}{n} \right)^s.
\end{equation}

It follows from the assumption (\ref{e11}) and the inequalities 
(\ref{e21}), (\ref{e22}), and (\ref{e23}) that with
positive probability none of these three events  holds. Fix a choice of
the random elements $w_i^{(p)}$ for which none of these events holds.
For each $i \in [k]$, pick a $p \in [s]$ such that no choice of 
$z \in G$, $j \neq i$ in $[k]$ and $q \in [s]$ satisfies (\ref{e231}) and (\ref{e232}). (Such a $p$ exists because the event $E_3$ does not occur).  We now define $W$ to contain these $k$ elements $w_i^{(p)}$, together with every element $w \in G$ that is not contained in the union of the chosen $g_i^{(p)}-C$'s (i.e., the translates $w_i^{(p)}+c_i-C$).  Since the event $E_1$ does not occur, we see that each $g_i^{(p)}$ is uniquely expressible in $W+C$ as $w_i^{(p)}+c_i$, which (as discussed above) guarantees the minimality of $C$.  It remains only to show that $W+C=G$.

Let $z \in G$ be an arbitrary element. We have to show that
$W$ intersects $z-C$.  If $z-C=g_i^{(p)} -C$ for one of
our $k$ chosen elements $g_i^{(p)}$, then $$w_i^{(p)} =
g_i^{(p)} -c_i \in g_i^{(p)}-C =z-C,$$ as needed.  Otherwise, no single translate $g_i^{(p)}-C$ covers $z-C$.
We will show that in this case, the $k$ sets $g_i^{(p)}-C$ do not cover
$z-C$. Since the event $E_2$ does not occur, there are at most $s$ sets
$g_i^{(p)}-C$ that intersect $z-C$.  We are done unless there is some particular $g_i^{(p)}-C$ that covers at least $k/s$ elements of $z-C$.   But then, because the event $E_3$ does not occur, we see that there is no $g_j^{(q)}$ such that $g_j^{(q)}-C$ covers the first element of $(z-C) \setminus (g_i^{(p)}-C)$, so $z-C$ is not covered by the translates $g_i^{(p)}-C$.  We conclude that $z \in W+C$.  Since this holds for every $z \in G$, we have $W+C=G$, which completes the proof.
\end{proof}

We now deduce Theorem~\ref{thm:infinite} from Theorem~\ref{thm:finite}. 

\begin{proof}[Proof of Theorem~\ref{thm:infinite}]
Let $G$ be an infinite abelian group, and let $C \subset G$ be a finite subset.  First, suppose $G$ is a torsion group.  Then we can find a finite subgroup $H$ containing $C$ such that $|H|>100 |C|^4$.  By Theorem~\ref{thm:finite}, $C$ is a minimal complement in $H$; Lemma~\ref{lem:subtobig} then tells us that $C$ is also a minimal complement in $G$.

Second, suppose $G$ is not a torsion group, and let $x \in G$ be an element of infinite order.  Then the subgroup $H$ generated by the set $C \cup \{x\}$ is finitely generated and has infinite order; write $H=K \times \mathbb{Z}^r$, where $K$ is a finite group and $r \geq 1$.  Choose some $M>100 |C|^4$ such that $C$ is contained in the ``cube'' $K \times [-M,M)^r$, and let $\pi: H \to K \times (\mathbb{Z}/2M \mathbb{Z})^r$ be the canonical projection map.  Note that $\pi$ is injective on $C$.  By Theorem~\ref{thm:finite}, $\pi(C)$ is a minimal complement in $K \times (\mathbb{Z}/2M \mathbb{Z})^r$; by Lemma~\ref{lem:quotient}, $C$ is a minimal complement in $H$; by  Lemma~\ref{lem:subtobig}, $C$ is a minimal complement in $G$, as desired.
\end{proof}

We remark that one can directly prove a slightly stronger version of Theorem~\ref{thm:infinite} by performing the construction of the proof of Theorem~\ref{thm:finite} in the infinite group and using transfinite induction. Taking $s = 1$, an examination of the proof shows that one can iteratively construct $W$ by adding  $c_i + w_i$ for each $c_i \in C$, and then adding all other elements that do not destroy the minimality of $C$. When we try to find the next $w_i$ to add to $W$, we make sure that the events $E_1, E_2,$ and $E_3$ do not occur. This will be true if the chosen $w_i$ is not contained in a certain union of translates of $C - C$ (determined by the elements that we already added to $W$), and the number of such translates is less than $\vert C \vert$ or is finite. We can therefore find an appropriate $w_i$ as long as $S + C - C \neq G$, for certain subsets $S$, with the cardinality of $S$ equal to the number of elements of $W$ already chosen. Therefore this procedure will succeed as long as $S + C - C$ is a proper subset of $G$ whenever $S$ has cardinality strictly smaller than that of $C$ or is finite. See also the proof of Theorem~\ref{thm:suppinfinite}.

\section{Many large sets are not minimal complements}\label{sec:upper}
We now turn to the problem of finding upper bounds on $T(G)$. Our first result shows that $T(n) = O(\sqrt{n})$ under certain divisibility conditions on $n$. We then proceed with the probabilistic proof of Theorem~\ref{thm:upperbound}, which extends to all values of $n$ at the cost of increasing the exponent from $1/2$ to $3/4+\varepsilon$.

\begin{prop}\label{prop:lower-construction}
Let $G$ be an abelian group of order $n$ with a subgroup $H$ of order $m$, and let $k$ be an integer satisfying $$\frac{2nm}{m+2n}<k<m.$$
Then no subset $C \subset H$ of size $k$ is a minimal complement in $G$.
\end{prop}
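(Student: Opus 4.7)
The plan is to argue by contradiction: suppose some $C \subseteq H$ with $|C|=k$ is a minimal complement for some $W \subseteq G$, and deduce $k \leq 2nm/(m+2n)$. The key idea is to run the counting argument behind Proposition~\ref{prop:size} coset-by-coset inside $H$.

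First I would decompose $W$ by cosets of $H$. For each coset $g+H$ of $H$ in $G$, write $W'_g = (W \cap (g+H)) - g \subseteq H$ and $k_g = |W'_g|$. Since $C \subseteq H$, the condition $W+C=G$ is equivalent to the coset-local statement $W'_g + C = H$ for every coset. Together with the hypothesis $k<m$, the necessary bound $k_g \cdot k \geq m$ forces $k_g \geq 2$ in every coset.

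Next I would distribute the essential witnesses of $C$ among the cosets. For each $c \in C$, minimality supplies an element of $G$ that is essentially witnessed by $c$ (in the terminology introduced just before Proposition~\ref{prop:apcyclic}); this element lies in a single coset, giving a cover $C = \bigcup_g C'_g$, where $C'_g \subseteq C$ consists of those $c$'s whose essential witness lies in $g+H$. Transferring to $H$, the $|C'_g|$ associated witnesses are distinct elements of $H$, each appearing as $w'+c$ for exactly one pair $(w',c) \in W'_g \times C$, while every other element of $H$ is hit at most $k_g$ times (fixing the $W'_g$-coordinate determines the $C$-coordinate). Counting the $k_g \cdot k$ pairs in $W'_g \times C$ by their sums in $H$ gives
\[
k_g \cdot k \;\leq\; |C'_g| + (m - |C'_g|)\, k_g,
\]
which rearranges to $|C'_g|(k_g - 1) \leq (m-k)\, k_g$, i.e.\ $|C'_g| \leq k_g(m-k)/(k_g-1) \leq 2(m-k)$, where the last inequality uses $k_g \geq 2$.

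Summing over the $n/m$ cosets and using $\bigcup_g C'_g = C$ yields $k \leq (n/m)\cdot 2(m-k)$, which rearranges directly to $k(m+2n) \leq 2nm$, contradicting the hypothesis. The only subtle point is running the Proposition~\ref{prop:size}-style count locally, since only $C'_g$ (rather than all of $C$) is essentially witnessed in coset $g$; but the argument only requires that the listed witnesses be distinct and uniquely represented, both of which still hold, so this causes no trouble.
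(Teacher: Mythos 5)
Your proof is correct and takes essentially the same approach as the paper's: argue by contradiction, decompose with respect to the cosets of $H$, show that each coset can account for at most $2(m-k)$ essential elements of $C$ (equivalently, uniquely represented sums), and sum over the $n/m$ cosets to get fewer than $k$ essential elements, contradicting minimality. The only difference is cosmetic: the paper obtains the per-coset bound from the overlap of just two translates $w_1+C$, $w_2+C$ inside a coset of size $m$, whereas you obtain it (in the slightly sharper form $k_g(m-k)/(k_g-1)$) by double-counting all pairs in $W'_g \times C$, localizing the count used in Proposition~\ref{prop:size}.
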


\begin{proof}
Assume (for the sake of contradiction) that $C$ is a minimal complement for some $W$ in $G$.  Since $C+W=G$ and $C$ is properly contained in $H$, we see that $W$ must contain at least two elements of each of the $n/m$ cosets of $H$.  Consider a coset $g+H$, and let $w_1,w_2$ be distinct elements of $W \cap (g+H)$.  The translates $w_1+C$ and $w_2+C$ each have size $k$ and are contained in $g + H$.  In the sum $\{w_1, w_2\}+C$, each element of $g+H$ is represented at most twice, so (since there are exactly $2k$ total sums) at least $2k-m$ elements are represented twice.  It follows that in the sum
$$(W \cap (g+H))+C=g+H,$$
there are at most $2m-2k$ elements that are uniquely represented.  Since this is true of each of the $n/m$ cosets of $H$, there are at most
$$\frac{2n(m-k)}{m}<k$$
elements of $G$ that are uniquely represented in $W+C$.  Thus, fewer than $k$ elements of $C$ are essential, which contradicts the minimality of $C$.
\end{proof}

\begin{cor}
We have that $\liminf_{n \to \infty} \frac{T(n)}{\sqrt{n}} \le \sqrt{2}$. 
\end{cor}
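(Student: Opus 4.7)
The plan is to exhibit an explicit infinite family of pairs $(G,k)$ where $G$ is an abelian group of order $n$ and $k$ is slightly less than $\sqrt{2n}$, such that Proposition~\ref{prop:lower-construction} forbids any subset of size $k$ contained in a prescribed subgroup from being a minimal complement. This immediately gives an upper bound on $T(n)$ for the corresponding values of $n$, and computing the limit along this subsequence recovers the constant $\sqrt{2}$.

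First I would analyze when the hypothesis $\frac{2nm}{m+2n}<k<m$ of Proposition~\ref{prop:lower-construction} admits an integer solution. The window has length $\frac{m^{2}}{m+2n}$, and a brief calculation shows that $k=m-1$ lies in it exactly when $m^{2}-m-2n>0$. The next step is to pick a convenient pair $(n,m)$ satisfying $m\mid n$ (so that $\mathbb{Z}/n\mathbb{Z}$ has a subgroup of order $m$) together with $m^{2}-m-2n>0$, and such that $m/\sqrt{n}\to\sqrt{2}$. The choice $n=n(m):=m\bigl(\tfrac{m}{2}-1\bigr)=\tfrac{m^{2}}{2}-m$ for even integers $m\ge 4$ does the job: the divisibility $m\mid n$ is immediate, and $m^{2}-m-2n=m>0$, so $k=m-1$ satisfies the hypothesis of the proposition.

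Having verified this, I would take $G=\mathbb{Z}/n\mathbb{Z}$ and let $H\subseteq G$ be its unique subgroup of order $m$; applying Proposition~\ref{prop:lower-construction} with $k=m-1$ shows that any subset $C\subset H$ of size $m-1$ (and there are plenty of these) fails to be a minimal complement in $G$. Hence $T(G)\le m-2$, and therefore $T(n(m))\le m-2$. Letting $m\to\infty$ through even integers gives
\[
\liminf_{n\to\infty}\frac{T(n)}{\sqrt{n}}\;\le\;\lim_{m\to\infty}\frac{m-2}{\sqrt{m^{2}/2-m}}\;=\;\sqrt{2},
\]
which is the claim.

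There is no real obstacle: all the work was already done in Proposition~\ref{prop:lower-construction}, and the only thing to be careful about is choosing the subsequence $n(m)$ so that (a) $m\mid n(m)$, (b) the interval in the proposition contains an integer $k$ close enough to $m$, and (c) $m/\sqrt{n(m)}\to\sqrt{2}$. The choice $n(m)=\tfrac{m^{2}}{2}-m$ with $k=m-1$ achieves all three simultaneously.
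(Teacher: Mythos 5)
Your proof is correct and is essentially the same as the paper's: both apply Proposition~\ref{prop:lower-construction} to $G=\mathbb{Z}/n\mathbb{Z}$ with a cyclic subgroup of order $m\approx\sqrt{2n}$ and $k=m-1$; indeed, your parametrization $n=m(\tfrac{m}{2}-1)$ with $m$ even is exactly the paper's choice $n=(k+1)(\lceil k/2\rceil-1)$ restricted to odd $k$. The verification of the window condition and the limit computation are both sound, so there is nothing to fix.
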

\begin{proof}
Let $G = \mathbb{Z}/n\mathbb{Z}$, where $n = (k + 1)(\lceil k/2 \rceil - 1)$. Then the element $\lceil k/2 \rceil - 1$ generates a subgroup of order $m=k+1$, and Proposition~\ref{prop:lower-construction} implies that $T(\mathbb{Z}/n\mathbb{Z}) \le k$. 
\end{proof}

We now transition to the proof of Theorem~\ref{thm:upperbound}.  Let $G$ be an abelian group of order $n$, and let $A$ be a random subset
of $G$ obtained by including each $g \in G$ independently with probability $p=n^{-1/4+\varepsilon}$.
In the following discussion, we say that an event holds \emph{with high probability} (\emph{whp}) if it holds with probability tending to $1$ as $n$ tends to infinity.
\begin{lemma}
\label{l21}
Let $G$ and $A$ be as above. Then whp
$|A|=(1+o(1)) n^{3/4+\varepsilon}$ and
every complement $B$ for $A$ (that is, a subset $B \subseteq G$ with $A+B=G$) satisfies
$$
|B| > \frac{1}{4p} \log n=\frac{1}{4} n^{1/4-\varepsilon} \log n.
$$
\end{lemma}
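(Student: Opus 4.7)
My plan is to prove the two claims separately. The first, $|A| = (1+o(1))n^{3/4+\varepsilon}$, is routine: $|A|$ is a sum of $n$ independent Bernoulli$(p)$ random variables with mean $pn = n^{3/4+\varepsilon}\to\infty$, so a standard Chernoff concentration bound suffices.

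For the second claim I would union bound over all candidate small complements. Set $k_0 := \lfloor\tfrac{1}{4p}\log n\rfloor$. For each fixed $B\subseteq G$ with $|B|=k\leq k_0$, I want $\mathbb{P}(A+B=G)$ small enough that $\sum_{k=1}^{k_0}\binom{n}{k}\mathbb{P}(A+B=G) = o(1)$. The key observation is that $A+B=G$ is equivalent to saying that every translate $g-B$ ($g\in G$) meets $A$, and the events $\{(g-B)\cap A = \emptyset\}$ are mutually independent across any family of $g$'s whose translates $g-B$ are pairwise disjoint. To exploit this, I would produce such a family greedily: starting from $S = \emptyset$, iteratively add a new $g\in G$ avoiding the forbidden region $\bigcup_{g'\in S}(g' + (B-B))$, whose size is at most $|S|\cdot|B-B|\leq |S|k^2$. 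The process runs until $|S|\geq n/k^2$, and then independence gives
\[
\mathbb{P}(A+B=G) \;\leq\; \bigl(1-(1-p)^k\bigr)^{|S|} \;\leq\; \exp\!\bigl(-(1-p)^k\, n/k^2\bigr).
\]

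The rest is parameter-wrangling. For $k\leq k_0$, $(1-p)^k \geq e^{-kp(1+o(1))} \geq n^{-1/4-o(1)}$ and $k^2 \leq (\log n)^2 n^{1/2-2\varepsilon}/16$, so the exponent above is at most $-n^{1/4+2\varepsilon-o(1)}/(\log n)^2$. On the other hand, $\log\binom{n}{k}\leq k\log n \leq (\log n)^2 n^{1/4-\varepsilon}/4$, which is smaller than the former by a factor of $n^{3\varepsilon}/(\log n)^4 \to \infty$. Hence $\binom{n}{k}\mathbb{P}(A+B=G)\leq e^{-n^{\Omega(\varepsilon)}}$ for each $k\leq k_0$, and summing over the at most $k_0\leq n$ relevant values of $k$ still yields $o(1)$, as desired.

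The main obstacle is purely arithmetic: the $k^2$ loss from the greedy packing costs roughly $n^{1/2}$ in the exponent of $\mathbb{P}(A+B=G)$, which would be fatal at the critical density $p=n^{-1/4}$; the whole point of the extra polynomial slack $n^{\varepsilon}$ in the definition of $p$ is to absorb this factor together with the polylogarithmic loss coming from $k_0 \sim (\log n)/p$. Any improvement would seem to require a better bound than the crude $|B-B|\leq k^2$, which one would not expect to hold for a generic $B$.
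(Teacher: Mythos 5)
Your proposal is correct and follows essentially the same route as the paper's proof: the paper also extracts a family of pairwise disjoint translates $t-B$ of size at least $n/|B|^2$ (which you justify via the same greedy packing), uses mutual independence of the events $(t-B)\cap A=\emptyset$ to bound $\mathbb{P}(A+B=G)$ by $\exp\bigl(-(1-p)^{|B|}n/|B|^2\bigr)$, and finishes with the same union bound over candidate sets $B$. The only cosmetic difference is that the paper treats only sets of size exactly $\frac{1}{4p}\log n$ (implicitly using that complements are closed under taking supersets), whereas you sum over all sizes $k\leq k_0$ directly.
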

\begin{proof}
The fact that $|A|=(1+o(1)) n^{3/4+\varepsilon}$ whp is trivial.
Fix a subset $B$ with $m=\frac{1}{4p} \log n$ elements.
There is a subset $T \subseteq G$ with at least $n/m^2=\tilde{\Theta}(n^{1/2+2\varepsilon})$
elements such that $t_1-B$ and $t_2-B$ are disjoint for all distinct elements $t_1,t_2 \in T$ (i.e., $(T-T) \cap (B-B)=\{0\}$).   For each fixed
$t \in T$, the probability of $t-B$ being disjoint from $A$ is precisely
$(1-p)^m \geq n^{-1/4+o(1)}$. Thus, the probability that
$t \in B+A$ (i.e., $t-B$ intersects $A$) is at most $1-n^{-1/4+o(1)}$.  Since the
events corresponding to distinct elements of $T$ are mutually independent, the choice of $T$ ensures that $T \subseteq A+B$ with probability at most
$$
(1-n^{-1/4+o(1)})^{|T|} \leq e^{-n^{1/4+\varepsilon}}.
$$
This is clearly an upper bound for the probability that $A+B=G$.
As there are only ${n \choose m} 
=e^{\tilde{\Theta}(n^{1/4-\varepsilon})}$ sets $B$ of size $m$, it follows that whp there is no such
$B$ satisfying $A+B=G$.
\end{proof}
\begin{lemma}
\label{l22}
Let $G$ and $A$ be as above. Then whp every subset
$D \subseteq G$ of size
$$
|D| \geq (1-\varepsilon)\frac{1}{4p} \log n=(1-\varepsilon)
\frac{1}{4} n^{1/4-\varepsilon} \log n
$$ satisfies $|G\setminus (A+D)| \leq n^{3/4+\varepsilon/2}$.
\end{lemma}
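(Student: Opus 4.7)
I'd follow the outline of the proof of Lemma~\ref{l21}: first establish a strong upper-tail bound on $Y_D := |G \setminus (A+D)|$ for each fixed $D$ of the critical size, then union-bound over $D$.  Since the event $|G \setminus (A+D)| \le n^{3/4+\varepsilon/2}$ is monotone under enlarging $D$, it suffices to handle $D$ with $|D|$ exactly $m := \lceil(1-\varepsilon)(\log n)/(4p)\rceil$ (for $|D| > m$, pass to any $m$-element subset).  Write $X_g := \mathbf{1}[A \cap (g-D) = \emptyset]$, so that $Y_D = \sum_{g \in G} X_g$ and each $X_g$ has $P(X_g = 1) = (1-p)^m \le n^{-(1-\varepsilon)/4} =: q$, giving $E[Y_D] = nq \le n^{3/4+\varepsilon/4}$---already a factor of $n^{\varepsilon/4}$ below the target $n^{3/4+\varepsilon/2}$.

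The key structural observation is that $X_g$ and $X_{g'}$ depend on disjoint portions of $A$, and so are independent, whenever $g - g' \notin D - D$.  I would therefore consider the Cayley graph $\mathrm{Cay}(G,(D-D)\setminus\{0\})$, whose maximum degree is at most $|D-D|-1 \le m^2-1$, and invoke the Hajnal--Szemer\'edi theorem to obtain an equitable proper coloring with $K \le m^2$ classes $T_1, \dotsc, T_K$, each of size $\lfloor n/K \rfloor$ or $\lceil n/K\rceil$.  Within each class, $S_i := \sum_{g \in T_i} X_g$ is a sum of $|T_i|$ independent Bernoulli$(q)$ variables.  Setting $L := n^{3/4+\varepsilon/2}$ and $L_i := \lceil L |T_i|/n \rceil$, we have $\sum_i L_i \le 2L$ (so $Y_D \ge 2L$ forces some $S_i \ge L_i$), the ratio $L_i/E[S_i] \ge L/(nq) \ge n^{\varepsilon/4}$ is large, and equitability forces $L_i \gtrsim L/K \gtrsim n^{1/4+5\varepsilon/2}/(\log n)^2$.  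The multiplicative Chernoff bound then gives
\[
P(S_i \ge L_i) \le \left(\frac{e\, E[S_i]}{L_i}\right)^{L_i} \le (e\, n^{-\varepsilon/4})^{L_i} \le \exp\!\bigl(-\Omega(n^{1/4+5\varepsilon/2}/\log n)\bigr),
\]
and summing over the $K \le m^2$ classes preserves this exponential rate.

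To close, I'd union-bound over the $\binom{n}{m} \le \exp(m \log n) = \exp(O(n^{1/4-\varepsilon}(\log n)^2))$ candidate subsets $D$.  Since $n^{1/4+5\varepsilon/2}/\log n$ dominates $n^{1/4-\varepsilon}(\log n)^2$ by a positive power of $n$ for every fixed $\varepsilon > 0$, the total failure probability is $\exp(-n^{\Omega(1)})$, giving the lemma (the factor $2$ in $2L$ is absorbed by starting with a marginally smaller $L$).  The main obstacle I anticipate is calibrating the concentration strongly enough to survive the union bound over so many $D$: a naive bounded-differences estimate, using Lipschitz constant $m$, gives only $\exp(-\Omega(n^{3\varepsilon}/\mathrm{polylog}))$ via McDiarmid's inequality, which is insufficient when $\varepsilon$ is small.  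The equitable partition into independence classes---which exploits the product structure of $A$ rather than merely Lipschitz continuity---is what delivers uniform control for all $\varepsilon > 0$.
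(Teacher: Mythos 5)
Your proposal is correct and follows essentially the same route as the paper's proof: reduce to $D$ of the critical size, form the Cayley graph of $G$ with connection set $(D-D)\setminus\{0\}$, apply Hajnal--Szemer\'edi to get an equitable proper coloring into at most $|D|^2$ classes, use concentration for the binomial count of uncovered elements within each class (where the indicators are mutually independent), and union-bound over the classes and the $e^{\widetilde{O}(n^{1/4-\varepsilon})}$ choices of $D$. The only difference is cosmetic: you make the Chernoff computation explicit with a per-class budget $L_i$, where the paper simply caps each class at twice its expectation and cites standard binomial tail estimates.
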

\begin{proof}
It clearly suffices to prove the statement for every  $D$ of cardinality
exactly $(1-\varepsilon)\frac{1}{4p} \log n$. 
Fix such a set $D$. Let $H$ be the Cayley 
graph of $G$ with respect to $(D-D) \setminus \{0\}$: the graph on the vertex set $G$ where distinct $g_1,g_2 \in G$ are adjacent if and only if there are 
$d_1,d_2 \in D$
such that $g_1=g_2+d_1-d_2$ (i.e., the sets $g_1-D$ and $g_2-D$
intersect).  The maximum degree of this graph is smaller than 
$|D|^2$. Therefore, by a well known theorem of 
Hajnal and Szemer\'edi \cite{HS} (which is
convenient to use here but can also be avoided), the graph $H$ 
has a proper coloring with at most $|D|^2=\tilde{\Theta}(n^{1/2-2\varepsilon})$
colors where the color classes differ in size by at most $1$.

For each color class $C$, the number of elements $t \in C$ that are not
covered by $A+D$ is a binomial random variable with parameters
$|C| = \tilde{\Theta}(n^{1/2+2\varepsilon})$ and 
$q=(1-p)^{|D|} =n^{-1/4(1-\varepsilon+o(1))}$.
By the standard estimates for binomial distributions (see, e.g., 
\cite{AS}, Appendix A), the probability the 
value of this random variable exceeds its expectation by a factor of
$2$ is exponentially small in the expectation, which in turn is larger than
$n^{1/4+\varepsilon}$. Therefore, whp this does not happen
for any of the color classes and any choice of $D$, as there are only
$e^{\tilde{\Theta}(n^{1/4-\varepsilon})}$ 
choices for $D$.  But this means that
whp $|G\setminus (A+D)| \leq n^{3/4+\varepsilon/2}$ holds for every such $D$.
\end{proof}
Using these two lemmas, we proceed with the proof of
Theorem~\ref{thm:upperbound}.
\begin{proof}[Proof of Theorem~\ref{thm:upperbound}]
Let $A$ be a set of size $(1+o(1))n^{3/4+\varepsilon}$ 
satisfying the conclusions of
Lemma \ref{l21} and Lemma \ref{l22}. We claim that $A$ is not a minimal
complement.  Assume (for contradiction) that $A$ is a minimal complement for some subset $B \subseteq G$.
Lemma \ref{l21} tells us that $|B| \geq \frac{1}{4p} \log n$. Partition $B$ 
arbitrarily into $t =\lceil 1/\varepsilon \rceil$ (pairwise disjoint) parts
$B=B_1 \cupdot B_2 \cupdot \ldots \cupdot B_t$ of equal cardinalities (up to a difference of $1$).
For each $i \in [t]$, let $D_i=B\setminus B_i$. Then each $D_i$ has
cardinality
$$
|D_i| \geq(1-\varepsilon) |B| \geq (1-\varepsilon) \frac{1}{4p} \log n. 
$$ Since $A$
satisfies the conclusion of Lemma \ref{l22}, each of the $t$ sets
$A+D_i$ covers all but at most
$n^{3/4+\varepsilon/2}$ of the elements of $G$. So in the sum $A+B$,
all but at most $t n^{3/4+\varepsilon/2}<|A|$ of the elements of $G$ 
are covered by every set $A+D_i$ and are hence covered at
least twice.  In particular, there are strictly fewer than $|A|$ elements that are covered exactly once, so not every element of $A$ can be essential, and $A$ is not in fact minimal as a complement for $B$.  This contradiction completes the proof.
\end{proof}

\section{Maximal supplements}\label{sec:supplements}

Problems about large supplements are ``dual'' to problems about small complements in a natural way: given a subset $W$ of a finite abelian group, the problem of finding the minimum size of a complement for $W$ and the problem of finding the maximum size of a supplement for $W$ are dual as integer programs.  Of course, not all minimal complements have the minimum size, and not all maximal complements have the maximum size, but this connection nonetheless motivates studying the two problems together.

We can characterize being a maximal supplement as follows.  $C$ is a supplement to $W$ if and only if $c_1+w_1=c_2+w_2$ has only the trivial solutions for $c_1, c_2 \in C$, $w_1, w_2 \in W$.  In particular, this condition is equivalent to $(C-C) \cap (W-W)=\{0\}$.  The maximality condition on $C$ is that for every $d \in G \setminus C$, the translate $d-C$ intersects $W-W$ nontrivially, i.e., $d \in C+W-W$.  Since $C \subseteq C+W-W$ trivially, this maximality condition can be expressed as $C+W-W=G$.  So, putting everything together, we have that $C$ is a maximal supplement in $G$ if and only if there is a subset $W$ satisfying
$$(C-C) \cap (W-W)=\{0\} \quad \text{and} \quad C+W-W=G.$$

One might ask if inverse results about minimal complements carry over to the setting of maximal supplements.  In particular, one might ask if every finite subset of $\mathbb{Z}$ is a maximal supplement and if every sufficiently small subset of a finite abelian group is a maximal supplement.  The answer to each of these questions is ``no'', for  a simple reason.  Recall that a subset $C$ of an abelian group $G$ is \emph{solid} if $C$ is not properly contained in any set $D$ such that $D-D=C-C$. We show that every maximal supplement is solid. 

\begin{prop}\label{prop:solid}
Let $G$ be an abelian group.  If the subset $C \subseteq G$ is a maximal supplement, then $C$ must be solid.
\end{prop}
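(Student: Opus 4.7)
The plan is to argue by contradiction using the characterization of maximal supplements given immediately before the proposition. Suppose $C$ is a maximal supplement for some $W \subseteq G$, so in particular $(C-C) \cap (W-W) = \{0\}$. Assume toward contradiction that $C$ fails to be solid, i.e., there exists some $D \subseteq G$ with $C \subsetneq D$ and $D - D = C - C$.

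The key observation is then immediate: since $D - D = C - C$, we have
\[
(D - D) \cap (W - W) = (C - C) \cap (W - W) = \{0\},
\]
which by the characterization recalled above means that $D$ is itself a supplement for $W$. But $D$ properly contains $C$, contradicting the assumption that $C$ is a maximal supplement for $W$. Hence no such $D$ exists, and $C$ is solid.

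There is no serious obstacle here; the entire content of the proposition is that the ``supplement'' condition depends on $C$ only through the difference set $C - C$, so enlarging $C$ while preserving $C - C$ automatically yields a strictly larger supplement and violates maximality. The proof is therefore a one-line verification once the difference-set characterization is in hand.
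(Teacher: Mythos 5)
Your proof is correct and is essentially identical to the paper's own argument: both take a set $D \supsetneq C$ with $D-D=C-C$, observe that $(D-D)\cap(W-W)=(C-C)\cap(W-W)=\{0\}$ makes $D$ a strictly larger supplement for $W$, and conclude that this contradicts maximality. Nothing is missing.
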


\begin{proof}
Assume (for the sake of contradiction) that $C$ is a maximal supplement to some $W$ but $C$ is not solid, and let $D$ be a set properly containing $C$ such that $D-D=C-C$.  Then
$$(D-D) \cap (W-W)=(C-C) \cap (W-W)=\{0\}$$
shows that $D$ is also a supplement to $W$, which contradicts the maximality of $C$.
\end{proof}

Now, we may ask if solidity is sufficient for a set to be a maximal supplement.  For finite subsets of  infinite groups, the answer is ``yes''. We first require a lemma that establishes a sufficient condition for $C$ to be a maximal supplement for a particular $W$.

\begin{lemma}\label{lem:supp-completion}
Let $G$ be an abelian group, and let $C \subseteq G$ be a non-empty solid subset.  If the subset $W \subseteq G$ satisfies
$$G \setminus (W-W)=(C-C)\setminus \{0\},$$
then $C$ is a maximal supplement for $W$.
\end{lemma}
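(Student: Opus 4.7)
The plan is to verify the two conditions that together characterize $C$ being a maximal supplement for a fixed $W$, as spelled out in the discussion at the start of Section~\ref{sec:supplements}: namely, $(C-C) \cap (W-W) = \{0\}$ and $C + W - W = G$. The supplement condition follows almost immediately from the hypothesis: the set $(C-C) \setminus \{0\}$ equals $G \setminus (W-W)$ and is therefore disjoint from $W-W$, while $0$ lies in both $C-C$ and $W-W$ (note that $W$ must be non-empty, since otherwise $G = G \setminus (W-W) = (C-C) \setminus \{0\}$ contains $0$ in its left-hand side but not its right, a contradiction).

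For the maximality condition $C + W - W = G$, I would fix an arbitrary $d \in G$ and show $d \in C + W - W$. If $d \in C$, then $d = d + w - w$ for any $w \in W$, so we are done. Otherwise, the hypothesis implies the covering $G = (C-C) \cup (W-W)$, so every element of the translate $d - C$ lies in $(C-C) \cup (W-W)$. If any $d - c$ happens to lie in $W - W$, then $d = c + (d-c) \in C + W - W$ and we are done. The remaining case is $d - C \subseteq C - C$, and this is precisely where solidity enters: using that $C-C$ is symmetric under negation, we also get $C - d = -(d - C) \subseteq C - C$, so setting $D := C \cup \{d\}$ yields
$$D - D = (C - C) \cup (d - C) \cup (C - d) \cup \{0\} = C - C.$$
Since $d \notin C$, we have $D \supsetneq C$, contradicting the solidity of $C$.

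The main obstacle, such as it is, is identifying the right case split and recognizing that solidity is exactly the property needed to rule out the scenario $d - C \subseteq C - C$. Once this is seen, the proof is clean, and each of the two pieces of the hypothesis (the fact that $C-C$ and $W-W$ cover $G$, and the fact that they intersect only at $0$) enters in a natural and essentially unavoidable way.
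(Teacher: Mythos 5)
Your proof is correct and follows essentially the same argument as the paper: both reduce the maximality condition $C+W-W=G$ to ruling out the case $d-C\subseteq C-C$, which is excluded by solidity via the set $D=C\cup\{d\}$ and the symmetry of $C-C$. The only (cosmetic) difference is that you argue element-by-element with an explicit case split on $d\in C$, while the paper phrases it as a single contradiction from a point $x\notin C+W-W$; your extra check that $W\neq\emptyset$ is a nice touch of care.
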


\begin{proof}
It is immediate that $$(C-C) \cap (W-W)=\{0\}.$$
Suppose (for the sake of contradiction) that $C+W-W \neq G$, and let $x \in G \setminus (C+W-W)$.  Then $x-C$ is disjoint from $W-W$, which means that
$$x-C \subseteq (C-C)\setminus \{0\}.$$
Note that $x \notin C$ since $x-C$ cannot contain the element $0$.  Now, the set $D=C \cup \{x\}$ witnesses that $C$ is not solid:
$$(C \cup \{x\})-(C \cup \{x\}) \subseteq (C-C) \cup (x-C)\cup (C-x) \subseteq C-C.$$
This contradiction lets us conclude that in fact $C$ is a maximal supplement to $W$.
\end{proof}

We now prove the promised result about finite subsets of infinite groups.

\begin{reptheorem}{thm:suppinfinite}
Let $G$ be an infinite abelian group, and let $C \subset G$ be a finite non-empty solid subset. Then $C$ is a maximal supplement in $G$. 
\end{reptheorem}

\begin{proof}
We construct a set $W$ for which $C$ is a maximal supplement.  Using the well-ordering principle,  fix a bijection of $(G \setminus (C - C)) \cup \{0\}$ with the minimal ordinal of cardinality $\vert G \vert$. This gives an ordering $\{g_\lambda\}_{\lambda \in \Lambda}$ on $(G \setminus (C - C)) \cup \{0\}$ where $0$ is the minimal element of $\Lambda$ and the set $\{\alpha: \enskip \alpha < \lambda\}$ has cardinality strictly smaller than that of $G$ for every $\lambda \in \Lambda$.  We may take $g_0=0$.  We now use transfinite induction to construct a sequence of (increasing) nested subsets $\{W_{\lambda}\}_{\lambda \in \Lambda}$ such that each $W_\lambda$ satisfies
$$(C-C) \cap (W_\lambda-W_\lambda)=\{0\} \quad \text{and}  \quad g_{\lambda'} \in W_\lambda-W_\lambda  \text{ for all }\lambda' < \lambda.$$ 
Then it is clear by Lemma~\ref{lem:supp-completion} that $C$ is a maximal supplement for
$$W=\bigcup_{\lambda \in \Lambda} W_\lambda,$$
since $G \setminus (W - W) = (C - C) \setminus \{0\}$.  For the base cases, set $W_0=W_1=\{0\}$.  For a limit element $\alpha$, we set $W_{\alpha} = \cup_{\lambda < \alpha} W_{\lambda}$.  For a successor element $\lambda+1$, suppose we already have $W_\lambda$.  We wish to find some $x \in G$ such that we may ``safely'' define $$W_{\lambda+1}=W_\lambda \cup \{x, x+g_{\lambda}\};$$
in order to ensure that $W_{\lambda+1}-W_{\lambda+1}$ intersects $C-C$ only at $0$, it suffices to check that
$W_\lambda-\{x, x+g_{\lambda}\}=W_\lambda -\{0,g_{\lambda}\}-\{x\}$ is disjoint from $C-C$ (since $C-C$ is symmetric).  Indeed, the induction hypothesis ensures that $W_\lambda-W_\lambda$ intersects $C-C$ only at $0$, and the elements $\pm g_{\lambda}$ of $W_{\lambda+1}-W_{\lambda+1}$ are not in $C-C$ by assumption.  So we must find $x$ such that
$$x \notin W_\lambda-\{0,g_{\lambda}\}-C+C.$$
Since $C - C$ is finite, the set $W_\lambda-\{0,g_{\lambda}\}-C+C$ is finite if $W_{\lambda}$ is finite and has cardinality equal to $W_{\lambda}$ if $W_{\lambda}$ is infinite. As the cardinality of $W_{\lambda}$ is strictly smaller than that of $G$, there always exists some such choice of $x$.
\end{proof}

This argument actually gives the following slightly more general statement.

\begin{theorem}
Let $G$ be an infinite abelian group, and let $C \subset G$ be a non-empty solid subset such that $G \neq S + C - C$ whenever $\vert S \vert < \vert G \vert$. Then $C$ is a maximal supplement in $G$.
\end{theorem}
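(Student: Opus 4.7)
The plan is to mirror the transfinite-induction construction from the proof of Theorem~\ref{thm:suppinfinite}, replacing the use of finiteness of $C-C$ with the new hypothesis that $S+C-C\neq G$ whenever $|S|<|G|$. As before, I would fix a well-ordering $\{g_\lambda\}_{\lambda\in\Lambda}$ of $(G\setminus(C-C))\cup\{0\}$ by the least ordinal $\Lambda$ of cardinality $|G|$, arrange $g_0=0$, and build a nested chain $\{W_\lambda\}_{\lambda\in\Lambda}$ with $W_0=W_1=\{0\}$, satisfying the inductive properties
\[
(C-C)\cap(W_\lambda-W_\lambda)=\{0\} \quad\text{and}\quad g_{\lambda'}\in W_\lambda-W_\lambda \text{ for every } \lambda'<\lambda.
\]
Setting $W=\bigcup_{\lambda}W_\lambda$ then yields $G\setminus(W-W)=(C-C)\setminus\{0\}$, and Lemma~\ref{lem:supp-completion} shows that $C$ is a maximal supplement for $W$. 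Limit stages are handled by taking unions; the only interesting case is the successor step.

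At a successor step $\lambda+1$, I would set $W_{\lambda+1}=W_\lambda\cup\{x,x+g_\lambda\}$ for a suitable $x\in G$. The same computation as in the earlier proof shows that the inductive properties are preserved as long as
\[
x\notin W_\lambda-\{0,g_\lambda\}-C+C.
\]
The right-hand side has the form $S_\lambda+C-C$ with $S_\lambda:=W_\lambda-\{0,g_\lambda\}$, so the existence of a suitable $x$ reduces to verifying that $|S_\lambda|<|G|$ and then appealing directly to the hypothesis.

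The one new ingredient is a cardinal-arithmetic bookkeeping, which I expect to be the main (though routine) obstacle. I would verify by transfinite induction on $\lambda<\Lambda$ that $|W_\lambda|<|G|$: each successor step increases $|W_\lambda|$ by at most two, and at a limit ordinal $\alpha<\Lambda$ one has $|W_\alpha|\le |\alpha|\cdot\sup_{\lambda<\alpha}|W_\lambda|$, which stays strictly below $|G|$ because $|\alpha|<|G|$ and $|G|$ is infinite. Hence $|S_\lambda|\le 2|W_\lambda|<|G|$ when $W_\lambda$ is infinite, while for finite $W_\lambda$ the bound $|S_\lambda|<|G|$ is automatic. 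In either case the hypothesis supplies an $x\in G\setminus(S_\lambda+C-C)$, completing the induction and hence the proof.
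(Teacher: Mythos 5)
Your proposal is correct and is essentially the paper's own proof: the paper gives no separate argument for this statement, merely noting that the transfinite-induction construction from Theorem~\ref{thm:suppinfinite} goes through once the finiteness of $C-C$ is replaced by the hypothesis that $S+C-C \neq G$ whenever $\vert S \vert < \vert G \vert$, which is exactly what you do. Your explicit cardinality bookkeeping (that $\vert W_\lambda \vert < \vert G \vert$ throughout, since each successor stage adds at most two elements) simply spells out what the paper asserts in a single sentence.
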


As was the case for additive complements, the finite group setting is more complex.  Our next aim is showing that every sufficiently small solid subset of a finite abelian group is a maximal supplement.  We again apply Lemma~\ref{lem:supp-completion}.  An old result of the first author \cite{Al} shows that every sufficiently large symmetric subset containing $0$ of a finite abelian group is a difference set of the form $A-A$.  The following appears as Theorem 4.1 in \cite{Al}.  (As discussed in that paper, the bound is essentially tight.)

\begin{theorem}[Alon \cite{Al}]\label{thm:diff}
There exists an absolute constant $c>0$ such that the following holds: Let $G$ be an abelian group of order $n$, and let $V=-V$ be a symmetric subset of $G$ containing $0$.  If
$$|V| \geq n-c\sqrt{n/\log n},$$
then there is some subset $A \subseteq G$ such that $V=A-A$.
\end{theorem}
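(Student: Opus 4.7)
Since Theorem~\ref{thm:diff} is attributed to Alon~\cite{Al}, the paper presumably uses it as a black box to prove Theorem~\ref{thm:suppfinite} (by setting $V = (G \setminus (C-C)) \cup \{0\}$ in Lemma~\ref{lem:supp-completion}); I will nonetheless sketch the approach I would attempt were I to reprove Theorem~\ref{thm:diff} independently. Set $U := G \setminus V$, so that $U = -U$, $0 \notin U$, and $|U| \leq c\sqrt{n/\log n}$. Finding $A$ with $A - A = V$ is equivalent to finding $A \subseteq G$ that is an independent set in the Cayley graph $\mathrm{Cay}(G, U)$ (which has maximum degree $|U|$) and whose difference set nevertheless exhausts $V$.

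My plan is a probabilistic construction with alterations. I would take $A_0 \subseteq G$ by including each element independently with probability $p \sim \sqrt{(\log n)/n}$, so that $|A_0|$ concentrates around $pn \sim \sqrt{n\log n}$ and, for each fixed $x \in G \setminus \{0\}$, the number of ordered representations $x = a - a'$ with $a, a' \in A_0$ concentrates around $p^2 n \sim \log n$ (via a standard variance/Chernoff estimate, after partitioning into orbits of $\langle x \rangle$ on which the relevant indicators decompose into collections of pairwise independent ones). Next, I would perform alterations, deleting one endpoint of each ``bad'' representation of some $u \in U$; by the above concentration the total deletion budget is $|U| \cdot O(\log n) \leq O(\sqrt{n\log n})$.

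The main obstacle is that this deletion budget is comparable in size to $|A_0|$ itself, so a crude union bound on the event that some $v \in V$ loses all $\Theta(\log n)$ of its representations is inadequate, and the naive first-moment analysis fails. Overcoming this will require finer tools: most plausibly a Lov\'asz Local Lemma step that exploits the local dependencies among the ``$v$ loses all representations'' events (each such event depends on $A_0$ only through the fibers over $2|V|$ pairs, and so has bounded dependency degree), or Fourier-analytic uniform control of the convolution $\mathbf{1}_{A_0} * \mathbf{1}_{-A_0}$ via the observation that $\widehat{\mathbf{1}_V}$ is sharply concentrated at the trivial character (because $V$ is almost all of $G$), or an algebraic seed such as a Singer-type Sidon subset placed inside $A_0$ to guarantee deterministically that every $v$ has at least one representation robust to alteration. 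The $\sqrt{\log n}$ slack in $p$ is characteristic of a Chernoff-plus-LLL analysis, so I would attempt the LLL route first before turning to Fourier or algebraic methods.
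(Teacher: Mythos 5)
You are right about the paper's treatment of this statement: it is not proved there at all, but quoted as Theorem 4.1 of \cite{Al} and used as a black box (together with Lemma~\ref{lem:supp-completion}) to prove Theorem~\ref{thm:suppfinite}. So the only ``proof'' in the paper is the citation, and your reading of how the result is deployed is accurate.

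As an attempt at the statement itself, however, your sketch has a genuine gap, and it is exactly the one you flag: the argument stalls at the crux and none of the proposed repairs is carried out. With $p \sim \sqrt{(\log n)/n}$, the alteration step may delete up to $|U| \cdot \Theta(\log n) = \Theta\bigl(c\sqrt{n \log n}\bigr)$ elements, a constant fraction of $|A_0| \sim \sqrt{n\log n}$; and these deletions are not uniformly random but are dictated by the $U$-representations, so a priori nothing prevents them from concentrating precisely on the $\Theta(\log n)$ witnesses of some fixed $v \in V$ (a set of only $O(\log n)$ elements, far inside the deletion budget). Of your three candidate fixes, the one you say you would try first rests on a false premise: the event ``$v$ loses all its representations after alteration'' does not have bounded dependency degree. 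Whether a given $a \in A_0$ is deleted depends on $A_0 \cap \bigl((a+U) \cup (a-U)\bigr)$, a set of size $\Theta(\sqrt{n/\log n})$, and whether $v$ survives depends on the deletion status of all pairs $(g, g-v)$, $g \in G$; composing these, each bad event depends on essentially the entire random set. (Your own parenthetical claim of dependence ``through the fibers over $2|V|$ pairs'' concedes this, since $2|V| \approx 2n$.) So the Lov\'asz Local Lemma does not apply as described, and the Fourier and Sidon-seed alternatives are only named, not developed. Since the paper notes that the bound in the theorem is essentially tight, the statement is genuinely delicate, and a complete write-up must either reproduce Alon's argument from \cite{Al} or cite it, as the paper does.
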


Combining these two pieces shows that every sufficiently small solid subset is a maximal supplement.

\begin{proof}[Proof of Theorem~\ref{thm:suppfinite}]
We have $|C-C| \leq b^2 \sqrt{n/\log n}$, so that $V=(G \setminus (C-C)) \cup \{0\}$ is a symmetric subset containing $0$ and $|V| \geq n-b^2 \sqrt{n/\log n}$.  By Theorem~\ref{thm:diff}, a sufficiently small choice of $b$ guarantees the existence of some $W \subseteq G$ such that $V=W-W$.  Since this $W$ satisfies the conditions of Lemma~\ref{lem:supp-completion}, we conclude that $C$ is a maximal supplement for $W$.
\end{proof}

As in the proof of Theorem~\ref{thm:infinite}, it is possible to deduce Theorem~\ref{thm:suppinfinite} from Theorem~\ref{thm:suppfinite} by showing that the property of being a maximal supplement lifts from certain subquotients. 

\section*{Acknowledgments}
The first author is supported in part by NSF grant DMS-1855464, ISF grant 281/14, and BSF grant 2018267. We thank the referee for several helpful comments.

\begin{bibdiv}
\begin{biblist}
\bib{Al}{article}{
author={Alon, N.}, 
title = {Large sets in finite fields are sumsets}, 
journal={J. Number Theory},
 volume={126},
 year= {2007}, 
 page={110--118}
}

\bib{AS}{book}{
    AUTHOR = {N. Alon and J. H. Spencer},
     TITLE = {The Probabilistic Method, Fourth Edition},
    SERIES = {Wiley Series in Discrete Mathematics and Optimization},
 PUBLISHER = {John Wiley \& Sons, Inc., Hoboken, NJ},
      YEAR = {2016},
     PAGES = {xiv+375},
      ISBN = {978-1-119-06195-3},
   MRCLASS = {60-02 (05C80 05D40 60C05 60F10 60G42)},
  MRNUMBER = {3524748},
}

\bib{Biswas2}{article}{
  title={Minimal additive complements in finitely generated abelian groups},
  author={Biswas, A.},
  author = {Saha, J.}
  journal={arXiv preprint arXiv:1902.01363},
  year={2019}
}

\bib{Biswas1}{article}{
  title={On co-minimal pairs in abelian groups},
  author={Biswas, A.},
  author = {Saha, J.}
  journal={arXiv preprint arXiv:1906.05837},
  year={2019}
}

\bib{Biswas3}{article}{
  title={On minimal complements in groups},
  author={Biswas, A.},
  author = {Saha, J.}
  journal={arXiv preprint arXiv:1812.10285},
  year={2018}
}

\bib{Burcroff}{article}{
title = {Sets arising as minimal additive complements in the integers},
author = {Burcroff, A.},
author = {Luntzlara, N.},
year = {2020},
journal = {arXiv preprint arXiv:2006.12481}
}

\bib{Chen}{article}{  
title={On a problem of Nathanson related to minimal additive complements},
  author={Chen, Y.},
  author = {Yang, Q.},
     JOURNAL = {SIAM J. Discrete Math.},
  fjournal={SIAM Journal on Discrete Mathematics},
  volume={26},
  number={4},
  pages={1532--1536},
  year={2012},
  publisher={SIAM}
}

\bib{habsieger}{article}{
  title={Additive completion and disjoint translations},
  author={L. Habsieger and I. Ruzsa},
     JOURNAL = {Acta Math. Hungar.},
  fjournal={Acta Mathematica Hungarica},
  volume={69},
  number={4},
  pages={273--280},
  year={1995},
}

\bib{HS}{inproceedings}{ 
    AUTHOR = {A. Hajnal and E. Szemer\'{e}di},
     TITLE = {Proof of a conjecture of {P}. {E}rd\H{o}s},
 BOOKTITLE = {Combinatorial theory and its applications, {II} ({P}roc.
              {C}olloq., {B}alatonf\"{u}red, 1969)},
     PAGES = {601--623},
      YEAR = {1970},
   MRCLASS = {05C99},
  MRNUMBER = {0297607},
MRREVIEWER = {J. W. Moon},

}

\bib{Honkala1991}{article}{
  year = {1991},
  publisher = {Institute of Electrical and Electronics Engineers ({IEEE})},
  volume = {37},
  number = {2},
  pages = {351--365},
  author = {I.S. Honkala},
  title = {Modified bounds for covering codes},
  journal = {{IEEE} Transactions on Information Theory}
}

\bib{Kiss}{article}{
    AUTHOR = {Kiss, S.},
    author = {S\'{a}ndor, C.}
    author = {Yang, Q.}
     TITLE = {On minimal additive complements of integers},
   JOURNAL = {J. Combin. Theory Ser. A},
  FJOURNAL = {Journal of Combinatorial Theory. Series A},
    VOLUME = {162},
      YEAR = {2019},
     PAGES = {344--353},
}

\bib{Kwon}{article}{    
AUTHOR = {Kwon, A.},
     TITLE = {A note on minimal additive complements of integers},
   JOURNAL = {Discrete Math.},
  FJOURNAL = {Discrete Mathematics},
    VOLUME = {342},
      YEAR = {2019},
    NUMBER = {7},
     PAGES = {1912--1918},
}

\bib{Nathanson}{article}{
author = {Nathanson, M.},
title = {Problems in additive number theory, IV: Nets in groups and shortest length g-adic representations},
journal = {Int. J. Number Theory},
volume = {7},
number = {8},
year = {2011},
}


\bib{ruzsa}{article}{
  title={An infinite Sidon sequence},
  author={Ruzsa, I.},
     JOURNAL = {J. Number Theory},
  fjournal={Journal of Number Theory},
  volume={68}
  number={1},
  pages={63--71},
  year={1998},
  publisher={Academic Press}
}

\end{biblist}
\end{bibdiv}

\end{document}